\newtheorem{definition}{Definition}
\newtheorem{assumption}{Assumption}
\newtheorem{lemma}{Lemma}
\newtheorem{theorem}{Theorem}
\theoremstyle{remark}
\newtheorem{remark}{Remark}
\def\eps{\varepsilon}
\newcommand\norm[1]{\left\lVert#1\right\rVert}
\title{On filter-type estimation of discretely sampled cyclic
long-memory processes.}
\author[1]{Antoine Ayache} 
\ead{antoine.ayache@univ-lille.fr}
\author[2]{Serhii Kravchenko}
\ead{20522507@students.latrobe.edu.au}
\author[2]{Andriy Olenko\corref{cor1}} 
\ead{a.olenko@latrobe.edu.au}
\affiliation[1]{organization={CNRS UMR 8524 - Laboratoire Paul-Painlevé,  Université de Lille}, 
city={Lille},
postcode={F-59000}, 
country={France}}
\affiliation[2]{organization={School of Computing, Engineering and Mathematical Sciences, La Trobe University}, city={Melbourne},
					postcode={3086},
					country={Australia}}
\begin{document}
\allowdisplaybreaks

	\begin{abstract}
		The generalized filtered method of moments was developed in the recent papers by Alomari et al., 2020, and Ayache et al., 2022. It used functional data obtained from continuously sampled cyclic long-memory stochastic processes to simultaneously estimate their parameters.  However, the majority of applications deal with discretely sampled processes or time series. This paper extends the approach to accommodate discrete-time scenarios. It proves that the new discrete estimates exhibit analogous properties to the continuous case and are strongly consistent with the same rates of convergence.  The numerical study results are presented to illustrate the theoretical findings and to indicate the sampling rates and resolution levels required for accurate estimates.
	\end{abstract}
  \begin{keyword}cyclic long-memory \sep time series  \sep  filter  \sep  wavelet  \sep  estimators of parameters  \sep   strong consistency
  		\MSC[2020] 62F10 \sep 62M15 \sep 62M10 \sep 91B84
  \end{keyword}

		\maketitle

	\section{Introduction}	
	
	The majority of applied time series publications associate the phenomenon of long-range dependence to
	singularities of spectral densities at the origin, see~\cite{LeonOle:2013} and references therein. However, singularities at non-zero frequencies
	play an important role in investigating the cyclic or seasonal behaviours of
	time series which can exhibit long-memory properties as well. In contrast to seasonal time series, non-seasonal cycles are often unknown in advance, presenting a challenge, in particular, in the analysis of physical and financial data.
	
	Several parametric and semi-parametric methods were proposed to model and study the case of time series with non-zero spectral poles, see \cite{ArtRob:2000, Giraitis:2001, Hidalgo:2005, Whitcher:2004, Kechagias:2024} and the references therein. Most of such publications either considered models with specific parametric forms of spectral densities at their singularity points or fractional processes of the Gegenbauer type, see the recent review paper \cite{Peiris:2018}. To estimate their parameters a number of approaches based on Gaussian maximum likelihood or quasi-likelihood were developed in the econometrics literature, see, for example,  \cite{ArtRob:2000, Giraitis:2001, Hosoya:1997}. 
	An exact maximum-likelihood estimator for the autoregressive fractional integrated moving average (ARFIMA) process was developed in \cite{Sowell:1992}. The procedure was computationally expensive for large datasets and required approximations of the maximum likelihood, see \cite{Jensen:1999}. Wavelet approximation has been used for the maximum likelihood parameter estimation as an alternative to the exact estimator or other approximation techniques such as the Fourier transform. In \cite{McCoy:1996} the discrete wavelet transform was applied to the ARFIMA process. Their results were extended in \cite{Jensen:1999} to simultaneously estimate short- and long-memory parameters of the ARFIMA model. The simulation studies revealed the robustness of wavelet approximation of the maximum likelihood compared to Whittle's maximum likelihood. 
	Later, the paper \cite{Whitcher:2004} discussed an approach to estimating parameters for the wider class of Gegenbauer processes. These results were extended to  $k-$frequency GARMA models in \cite{Boubaker:2015}. 
	There were also attempts to apply minimum contrast estimation methodology for long-range dependent 
	models, see \cite{Anh:2004, Espejo:2015}.   
	Unfortunately, the mentioned results either do not provide simultaneous estimators of cyclic and long memory parameters or exhibit a lack of robustness leading to inconsistent parameter estimates for misspecified models.
 
	Due to its significance in practical applications, this research area remains active, with the introduction of new models and estimation approaches. For example, some other directions in the statistical inference of random processes 
	characterized by certain singular properties of their spectral densities were investigated in \cite{Kechagias:2024, Leonenko:1999, Tsai:2015, Sabzikar:2019}. Bayesian approaches and the comparison of alternative estimation methods can be found in \cite{Hunt:2022, Hunt:2023, Smallwood:2021, Whitcher:2004}.
	
	To avoid repetitions, we refer the readers to very detailed motivation, discussion and various examples in \cite{AAFO, Olenko:2022, Olenko:2013}.
	
	The publications \cite{AAFO, Olenko:2022} proposed a new approach to simultaneously estimate cyclic and long-memory parameters that was based on filter transforms (including wavelet transforms as a special case) of stochastic processes. The suggested estimates were developed for functional data obtained from continuously sampled stochastic processes.  However, in the majority of applications only processes or time series that are discretely sampled on a finite interval are available. We demonstrate how to extend the approach from \cite{AAFO, Olenko:2022} to discrete-time sampling scenarios. The suggested new discrete estimates exhibit analogous properties and converge to the true values of the parameters as in the continuous case. Moreover, it is shown that they have the same rate of convergences.
	
	The proofs are based on an examination of the proximity between discrete and continuous filter transforms. It is shown that the deviations between them are of a smaller order compared to the distances between continuous estimators and the true values of parameters. Thus, it can be applied to extend the results in \cite{AAFO} from the continuous case to discretely sampled data. These findings regarding the closeness of discrete and continuous filter transforms can also be of interest in other statistical applications that use filtering.
	
	The paper is structured as follows. Section~\ref{def_assumptions} provides the main definitions and notations. Section~\ref{section 1st statistic} introduces a version of filter transforms for the case of finite discrete samples. Then, it derives a series of results on the closeness of discrete and continuous filter transforms and uses them to investigate the convergence of the first discrete statistics.  Section~\ref{section 2nd statistic} introduces the second discrete statistics.  The adjusted statistics that simultaneously 
	estimate location and long memory parameters are considered in Section~\ref{Section Estimation}. 
	Section~\ref{sec_num} presents numerical studies that illustrate the obtained results. Finally, some future research directions are given in Section~\ref{concl}.

	\section{Definitions and assumptions}\label{def_assumptions}
	
	This section provides main definitions and assumptions about the considered class of semiparametric models and used filter transforms, see \cite{AAFO, Olenko:2022} for more details.
	
	The paper will use the following notations: $\mathbb{R}_+ = (0, +\infty)$ and,  unless otherwise specified, $|| \cdot ||$ will  denote the $L_2$ norm in a considered space. The symbols $C,$ $\varepsilon$ and their versions with subindices will denote constants which are not important for our exposition. Moreover, the same symbol may be used for different constants appearing in the same proof.
	
	Let $ X(t),$ $t \in \mathbb{R},$ be a measurable mean-square continuous real-valued stationary zero-mean
	Gaussian stochastic process defined
	on a probability space $(\Omega, \mathcal{F}, P),$ with the
	covariance function
	\[\mathrm{B} (r):={\rm Cov}(X(t), X(t'))=\int_ \mathbb{R} \mathrm{e}^{iu(t-t')} F(du), \quad t, t' \in \mathbb{R}, \]
	where $r=|t-t'|$ and $F(\cdot)$ is a non-negative finite symmetric measure on $\mathbb{R}.$
	
	For each $t,$ the random variable $X(t)$ belongs to the  space $\mathscr{L}_2 = \mathscr{L}_2(\Omega, \mathcal{F}, P)$ of random variables  with a finite second moment, i.e.  $\mathbb{E}|X(t)|^2 < \infty.$
	The norm in $\mathscr{L}_2$ is defined by $|| X(t) || = \sqrt{\mathbb{E}|X(t)|^2}.$

	\begin{definition}
		The real-valued random process $X(t),$ $t \in \mathbb{R},$ possesses an absolutely
		continuous spectrum if there exists a non-negative even function $f(\cdot)\in L_{1}( \mathbb{R})$ such that
		\[F(u)=\int_{-\infty }^ {u} f(\lambda) d\lambda, \quad  u\in \mathbb{R}. \]
	\end{definition}
	
	The function $f(\cdot)$ is called the spectral density of the process $ X(t).$

	The real-valued process $ X(t), t \in \mathbb{R},$ with an absolutely continuous spectrum has the
	following isonormal spectral representation
	\[X(t)= \int_ {\mathbb{R}}\mathrm{e}^{i t\lambda} \sqrt{f(\lambda)} dW(\lambda),\]
	where $W(\cdot)$ is a complex-valued Gaussian orthogonal random measure defined on ~$\mathbb{R},$ which satisfies the condition
	$W\left(\left[\lambda_1,\lambda_2\right]\right)=W\left(\left[-\lambda_2,-\lambda_1\right]\right)$ for any $\lambda_2>\lambda_1>0,$ see  \cite[\S 6]{Taqqu:1979}.

	The following assumption in the spectral domain introduces the semi\-para\-met\-ric model investigated in this paper.
	\begin{assumption}\label{Assumption_1}
		Let the spectral density $f(\cdot)$ of $X(t)$ admit the following representation
		\[f(\lambda)=\frac{h(\lambda)}{|\lambda^{2}-s_{0}^{2}|^{2\alpha}},\quad \lambda \in \mathbb{R},\]
		where $s_{0}> 1,\, \alpha\in (0,{1}/{2})$ and $h(\cdot)$ is an even non-negative bounded function that is four times continuously differentiable. Assume that its derivatives of order $i$ satisfy $h^{(i)}(0)=0,$ $i=1,2,3,4.$ Also, $h(0)=1,$ $h(\cdot)>0$ in some neighborhood of $\lambda=\pm  s_0,$ and for all $\varepsilon>0$ it holds
		\[\int_\mathbb{R}\frac{h(\lambda)}{(1+|\lambda|)^\varepsilon }d\lambda<\infty.\]
	\end{assumption}
	Let us denote by \[c_1: = \max_{\lambda \in [0, \frac{1}{2}]}(h^{(2)}(\lambda), h^{(4)}(\lambda)).\]
	
	\begin{remark}
		Stochastic processes with spectral densities satisfying Assumption~\ref{Assumption_1} have cyclic (seasonal) long memory. Their spectral densities have singularities at non-zero locations $\pm s_0.$ The parameter $s_0$ determines seasonal or cyclic behaviour. The parameter $\alpha$ is a long-memory parameter. The boundedness of $h(\cdot)$ guarantees that the spectral density does not have singularities at other locations. Covariance functions of
		such processes have hyperbolically decaying oscillations and are non-integrable, that is $\int_{\mathbb{R}}|\mathrm{B} (r)|dr = \infty,$ when  $\alpha\in\left(0,1/2\right),$ see \rm \cite{AAFO, ArtRob:1999}.
		
	\end{remark}
	
	\begin{remark}
		An example of such processes is the Gegenbauer model which has spectral density satisfying Assumption~\ref{Assumption_1} since
		\[	f(\lambda) = C(2|\cos\lambda - \cos s_0|)^{-2\alpha}
		\sim  C|\lambda^2 - s_0^2|^{-2\alpha},
		\]
		when $\lambda \rightarrow \pm{s_0},$ see \rm\cite{Espejo:2015}. The plot of such density and the trajectory of the corresponding process are shown in Figure~\ref{fig1*}.
	\end{remark}
	
	\medskip
	
	\begin{remark}
		In this paper cyclic long-memory processes with spectral densities at nonzero frequencies $s_0$ are considered. Differences between the
		cases of spectral singularities at the origin and other locations were discussed in detail in \cite{AAFO, ArtRob:1999}.
		Without loss of generality, it will be assumed that $s_0 > 1.$ Indeed, the
		frequency $s_0$ can be computed as $s_0 = 1/T,$ where $T$ is a period of a time series.  By changing the time unit, the parameter $s_0$ can be made greater than~1.
	\end{remark}

	 \begin{figure}[ht]
			\begin{minipage}{0.5\textwidth}
					\includegraphics[width = \textwidth, height = \textwidth,trim={0 0 0 2cm},clip]{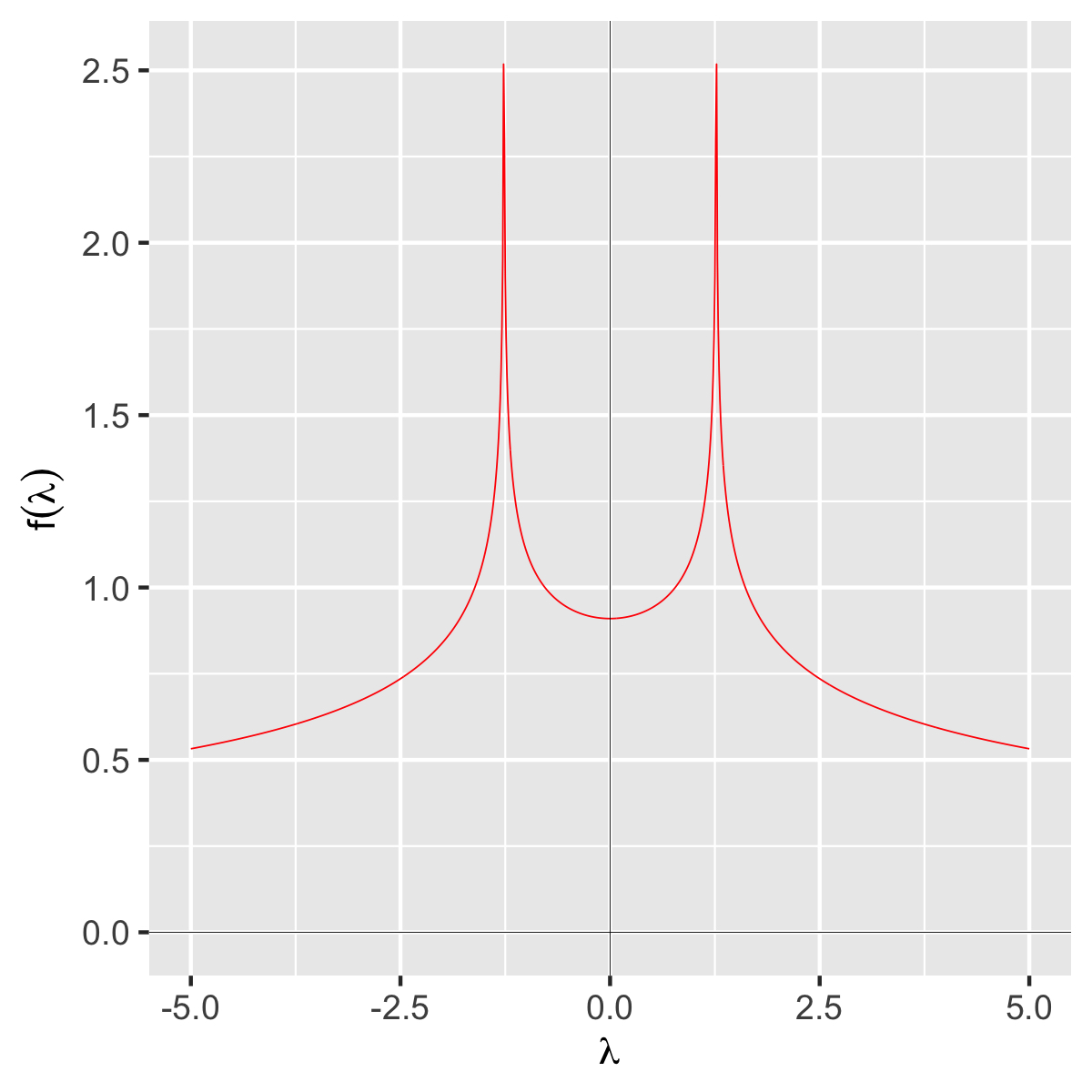}
			\end{minipage}
			\begin{minipage}{0.5\textwidth}
				\includegraphics[width = \textwidth, height = \textwidth,trim={0 0 0 2cm},clip]{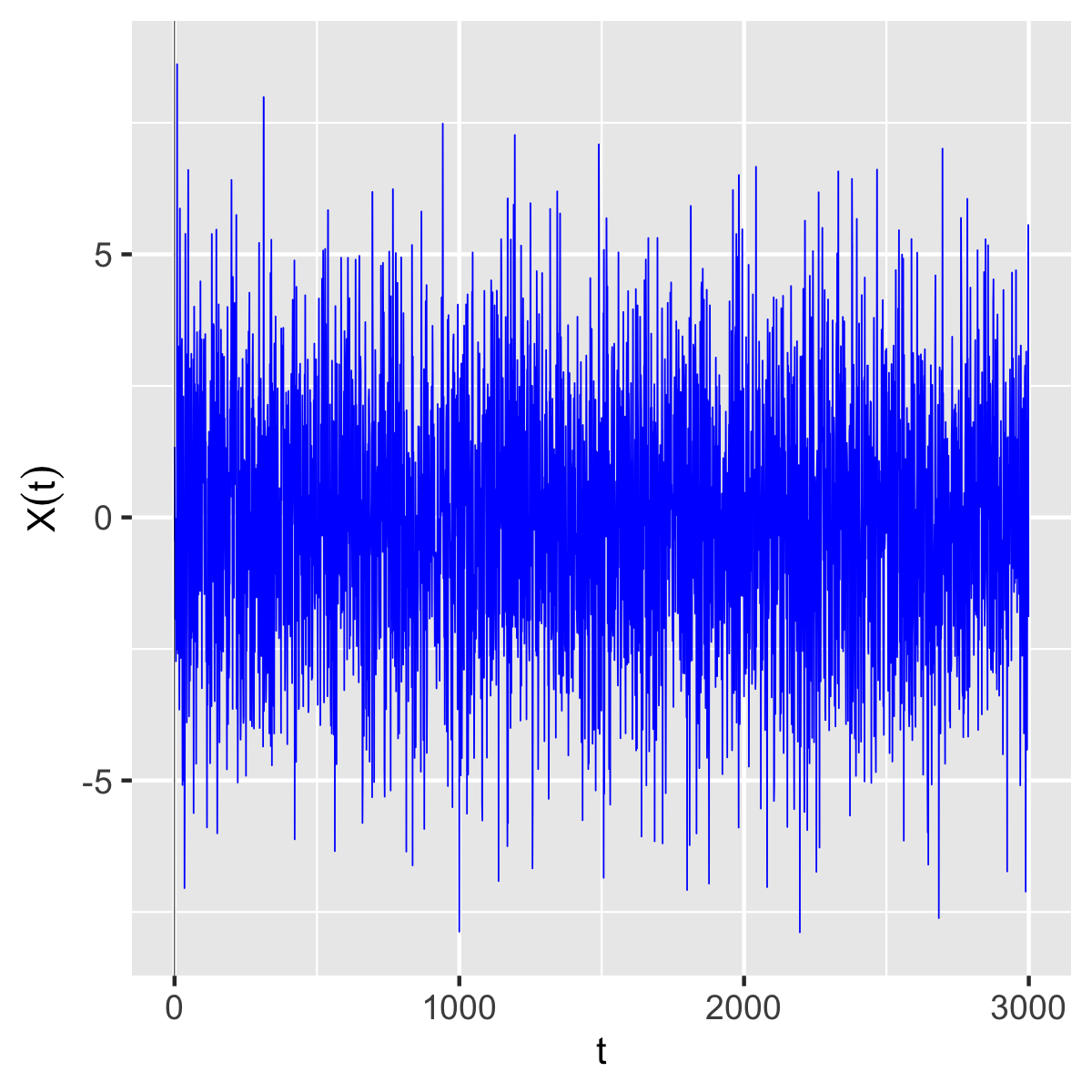}
			\end{minipage}		
		\caption{Plots of the spectral density(left) and the realization of the corresponding time series (right) for the Gegenbauer model with $s_0 = 1.27$ and $\alpha = 0.1$ }\label{fig1*}
	\end{figure}

	\medskip
	Let us consider a real-valued even function $\psi (\cdot)\in L_1(\mathbb{R})\cap L_2(\mathbb{R})$. 
	Its Fourier transform $\widehat{\psi}(\cdot)$ is defined, for each $\lambda \in \mathbb{R}$, as $\widehat{\psi}(\lambda)=\int_\mathbb{R}e^{-i\lambda t} \psi(t) dt$. Thus, $\widehat{\psi}(\cdot)$ is a bounded even real-valued function.
		
	\begin{assumption}\label{Assumption_2}
		Let $ {\rm supp} \, \widehat \psi\subset [-A,A],$ $A>0,$ and
		$\widehat \psi(\cdot)$ is of bounded variation on~$[-A,A].$
	\end{assumption}
	This assumption is technical and can be replaced by a sufficiently fast decay rate of $\widehat \psi(\cdot)$ at infinity. For example, the simulation studies in \cite[Example~3]{Olenko:2022} demonstrated that the decay rate of the Gaussian tail order is sufficient for the validity of the obtained results.
	
	To formulate the obtained results we will use the following constants
	\[c_2 := \int_\mathbb{R}|\widehat{\psi}(\lambda)|^2 d\lambda \quad \mbox{and}\quad c_3:=\max_{\lambda \in [-A, A]}|\widehat{\psi} (\lambda) |^2.\]
	
	\begin{definition}\label{def2} For any pair $(a, b) \in \mathbb{R}_+ \times \mathbb{R},$  the filter transform of the process $X(t)$ is defined by
		\begin{equation*}
			d(a, b) := \frac{1}{\sqrt{a}} \int_{\mathbb{R}} \psi\left( \frac{t - b}{a}\right) X(t)dt = \sqrt{a} \int_{\mathbb{R}} e^{ib\lambda}  \overline{\widehat{\psi}(a\lambda)}\sqrt{f(\lambda)}dW(\lambda).
		\end{equation*}
		
	\end{definition}
	Definition~\ref{def2} provides two equivalent expressions of the filter transform in the spectral and time domains. By the construction, $d(a, b)$ is a Gaussian random variable with
	\[\mathbb{E}d(a, b) = 0,\]
	\[\mathbb{E}|d(a, b)|^2 = a \int_{\mathbb{R}} |\widehat{\psi}(a\lambda)|^2 f(\lambda) d\lambda := J(a).\]
	
	A very detailed motivation, discussion,  and various particular examples of the introduced model, which include wavelet transforms and Gegenbauer processes as special important cases, can be found in publications~\cite{AAFO, Olenko:2022}.

	\bigskip

	\begin{assumption} \label{Assumption_3}
		Let the integral $ \int_{\mathbb{R}^2} \psi(t)B(t-t')\overline{\psi(t')}dt dt'$
		exists as an improper Cauchy integral on the plane. This integral always exists as a Lebesgue integral since $|B|$ is a bounded function and $\psi$ is an integrable function. Indeed, 
			\begin{align*}
				&\int_{\mathbb{R}^2} \big|\psi(t)B(t-t')\overline{\psi(t')}\big|dt dt'
    \le B(0) \bigg (\int_{\mathbb{R}} \big|\psi(t)\big|dt\bigg)^2<\infty
			\end{align*}
	\end{assumption}

\section{First Discrete Statistics}\label{section 1st statistic}

This section derives results analogous to Section~3~in~\cite{AAFO}, but for a discretely sampled process $X(t)$.

Let us define the following discretized and truncated versions of $d(a, b)$
\[{d}^{(\theta)} (a, b) := \frac{1}{\sqrt{a}} \int_{-\theta}^{\theta} \psi\left( \frac{t - b}{a}\right) X(t)dt,\]
\begin{equation}\label{djk}
d^{(\theta,\delta)}(a, b) := \frac{1}{\sqrt{a}}\sum_{l=[-\theta/\delta]}^{[\theta/\delta]}\left(\int_{\max(-\theta,l\delta)}^{\min(\theta,(l+1)\delta)}\psi\left(\frac{t - b}{a}\right)dt\cdot X(\delta l)\right),
\end{equation}
where $\theta,\delta \in \mathbb{R_+},  \delta \leq \theta$ and $[\cdot]$ denotes the integer part function. 

\begin{remark}
	The introduced transform $d^{(\theta,\delta)}(a, b)$ represents a discrete analogue of the filter transform ${d} (a, b)$ of functional data. To compute it, only a finite set of values of $X(t)$ sampled on a discrete grid with the resolution $\delta$ is required. In this case, the filter coefficients (discrete weights) are given by $\int_{\max(-\theta,l\delta)}^{\min(\theta,(l+1)\delta)}\psi\left(\frac{t - b}{a}\right)dt,$ $l=[-\theta/\delta],\dots,[\theta/\delta],$ and can be readily computed for a given filter function $\psi(\cdot).$
\end{remark}

Let us use the following  notations
\[d_{jk} := d(a_j, b_{jk}), \quad {d}^{(\theta)}_{jk} := d^{(\theta)} (a_j, b_{jk}),\quad {d}_{jk}^{(\theta,\delta)} := d^{(\theta,\delta)} (a_j, b_{jk}),\]
\[
{d}_{j.}^{(2)} := \frac{1}{m_j}\sum_{k=1}^{m_j}d_{jk}^{2}.
\]
where $\{a_j\} \subset \mathbb{R_+},$ $\{m_j\} \subset \mathbb{N}, j \in \mathbb{N},$ are unboundedly monotone increasing sequences, and $\{b_{jk} \}\in \mathbb{R}, j \in \mathbb{N},$ $k\in \{1,\dots,m_j\}$ is such that $b_{jk_1} \ne b_{jk_2}$ for all $j \in \mathbb{N}$ and $k_1 \ne k_2,$ see~\cite{AAFO}.

In the following we consider the sequences $\{b_{jk}\}$ and $\{\gamma_{j}\}\subset \mathbb{R_{+}}$ such that  for all $ j,k_{1}, k_{2}\in \mathbb{N}$ it holds
$|b_{jk_{1}}-b_{j k_{2}}|\geq|k_{1}-k_{2}| \gamma_{j}.$ Also, we will be using the following notation $\tilde{b}_{j}:=\max_{k\in \{1,\dots,m_j\}}{|b_{jk}}|.$

Analogously to ${d}_{j.}^{(2)}$  let us introduce the next average of squared discretised coefficients
\begin{equation}\label{first_discr_stat}
	{d}_{j.}^{(2, \theta, \delta)} := \frac{1}{m_j}\sum_{k=1}^{m_j}\left({d}_{jk}^{{(\theta,\delta)}}\right)^2.
\end{equation}

To estimate the parameters $s_0$ and $\alpha$ we will introduce two statistics that are based on the coefficients ${d}_{j.}^{(2, \theta, \delta)}.$  To study the behaviour of  ${d}_{j.}^{(2, \theta, \delta)}$ and these statistics we need the following auxillary results.
\begin{lemma} \label{tails for d}
	Let $|\psi(u)| \le C |u|^{-q},$ where $q > 1$.Then, there is a constant $C_1>0$ such that for $\theta > |b_{jk}|$  it holds
	\begin{equation*}
		\mathbb{E} \left({d}_{jk}^{(\theta)}  - d_{jk} \right)^2 \le C_1 \frac{a_j^{2q - 1}}{(\theta-|b_{jk}|)^{2q - 2}}.
	\end{equation*}
\end{lemma}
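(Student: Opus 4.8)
The plan is to exploit the fact that $d^{(\theta)}_{jk}-d_{jk}$ is exactly the contribution to the filter transform coming from the truncated tails $|t|>\theta$, and then to estimate its variance directly in the time domain via the covariance function $\mathrm{B}$. Subtracting the two definitions gives
\[
d^{(\theta)}_{jk}-d_{jk} = -\frac{1}{\sqrt{a_j}}\int_{|t|>\theta}\psi\!\left(\frac{t-b_{jk}}{a_j}\right)X(t)\,dt,
\]
so the object to bound is the variance of a single stochastic integral over the tail region. Using that $X$ is a zero-mean stationary Gaussian process with $\mathbb{E}[X(t)X(t')]=\mathrm{B}(t-t')$, I would write this second moment as the double integral
\[
\mathbb{E}\bigl(d^{(\theta)}_{jk}-d_{jk}\bigr)^2 = \frac{1}{a_j}\int_{|t|>\theta}\int_{|t'|>\theta}\psi\!\left(\frac{t-b_{jk}}{a_j}\right)\psi\!\left(\frac{t'-b_{jk}}{a_j}\right)\mathrm{B}(t-t')\,dt\,dt'.
\]

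The key simplification is to bound $|\mathrm{B}(t-t')|\le\mathrm{B}(0)$, which holds by positive definiteness of the covariance (this is the same device used implicitly in Assumption~\ref{Assumption_3}). This factorizes the double integral and yields
\[
\mathbb{E}\bigl(d^{(\theta)}_{jk}-d_{jk}\bigr)^2 \le \frac{\mathrm{B}(0)}{a_j}\left(\int_{|t|>\theta}\left|\psi\!\left(\frac{t-b_{jk}}{a_j}\right)\right|dt\right)^{\!2}.
\]
It then remains to estimate the one-dimensional tail integral. After the substitution $u=(t-b_{jk})/a_j$, the region $t>\theta$ becomes $u>(\theta-b_{jk})/a_j$ and $t<-\theta$ becomes $u<-(\theta+b_{jk})/a_j$, both of which are bona fide tails since $\theta>|b_{jk}|$ forces $\theta-b_{jk}>0$ and $\theta+b_{jk}>0$. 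Inserting the hypothesis $|\psi(u)|\le C|u|^{-q}$ and integrating (convergent because $q>1$) gives a bound of order $a_j^{q}\bigl[(\theta-b_{jk})^{-(q-1)}+(\theta+b_{jk})^{-(q-1)}\bigr]$.

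Finally I would use the elementary inequalities $\theta-b_{jk}\ge\theta-|b_{jk}|$ and $\theta+b_{jk}\ge\theta-|b_{jk}|$ to replace both denominators by $(\theta-|b_{jk}|)^{q-1}$, so that the tail integral is at most a constant multiple of $a_j^{q}(\theta-|b_{jk}|)^{-(q-1)}$. Squaring this and dividing by $a_j$ produces the claimed rate $a_j^{2q-1}(\theta-|b_{jk}|)^{-(2q-2)}$, with the constant $C_1$ absorbing $\mathrm{B}(0)$, $C$, and the factor $(q-1)^{-2}$. I do not expect a serious obstacle here, as the computation is essentially routine; the only points requiring care are justifying the exchange of expectation and integration in the variance formula (which is licensed by the same finiteness argument as in Assumption~\ref{Assumption_3}, now on the restricted domain) and keeping the two tails symmetric so that the single factor $(\theta-|b_{jk}|)$ controls both.
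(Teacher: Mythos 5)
Your proposal is correct and follows essentially the same route as the paper's proof: both bound the tail contribution by replacing $\mathrm{B}(t-t')$ with $\mathrm{B}(0)$ to factorize the double integral, substitute $u=(t-b_{jk})/a_j$, apply the decay hypothesis $|\psi(u)|\le C|u|^{-q}$, and control both tails by $(\theta-|b_{jk}|)^{-(q-1)}$. The only cosmetic difference is that the paper first splits the two tails via $(x+y)^2\le 2x^2+2y^2$ into separate terms $I_1,I_2$, whereas you keep them in a single integral before squaring; the resulting bound is the same.
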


\begin{proof} It follows from the definitions of ${d}_{jk}^{(\theta)}$ and $ d_{jk}$ that
	\begin{equation*}
		\begin{aligned}[b]
			&\mathbb{E}  \left( {d}_{jk}^{(\theta)}  - d_{jk} \right)^2\\
			&\ = \mathbb{E} \left( \frac{1}{\sqrt{a_j}} \int_{-\infty}^{-\theta}   \psi \left(\frac{t - b_{jk}}{a_j}\right) X(t) dt + \frac{1}{\sqrt{a_j}} \int_{\theta}^{\infty}   \psi \left(\frac{t - b_{jk}}{a_j}\right) X(t) dt \right)^2 \\
			&\
			\le \frac{2}{a_j} \Bigg[ \underbrace{\mathbb{E}  \left( \int_{-\infty}^{-\theta}   \psi \left(\frac{t - b_{jk}}{a_j}\right) X(t) dt \right)^2 }_{I_1} + \underbrace{\mathbb{E}  \left( \int_{\theta}^{\infty}   \psi \left(\frac{t - b_{jk}}{a_j}\right) X(t) dt \right)^2}_{I_2}  \Bigg].
		\end{aligned}
	\end{equation*}

	Let us estimate the first expected value as
	\begin{equation*}
		\begin{aligned}[b]
			I_1 & = \int_{-\infty}^{-\theta} \int_{-\infty}^{-\theta}  \psi \left(\frac{t - b_{jk}}{a_j}\right)	\psi \left(\frac{s - b_{jk}}{a_j}\right)  \mathrm{B} (t -s) dt ds\\
			& \le \mathrm{B} (0) \left( \int_{-\infty}^{-\theta} \left|  \psi \left(\frac{t - b_{jk}}{a_j}\right)   \right|  dt \right) ^2 = a_j^2 \mathrm{B} (0) \left( \int_{-\infty}^{\frac{-\theta -b_{jk}}{a_j}} \left|  \psi \left(u\right)   \right|  du \right) ^2\\
			& \le C a_j^2 \mathrm{B} (0)  \left( \int_{-\infty}^{\frac{-\theta -b_{jk}}{a_j}} \left| u  \right|^{-q}  du \right) ^2\le C \frac{a_j^{2q}}{(\theta + b_{jk})^{2q-2}},   \end{aligned}
	\end{equation*}
	where the upper bound $C |u|^{-q}$ for $|\psi(u)|$ was used.
	
	Applying the same estimate approach to the second expected value $I_2$ one obtains the required upper bound
	\[\mathbb{E} \left( {d}_{jk}^{(\theta)}  - d_{jk} \right)^2 \le C \frac{a_j^{2q-1}}{(\theta + b_{jk})^{2q-2}} +  C \frac{a_j^{2q-1}}{(\theta - b_{jk})^{2q-2}} \le C_1 \frac{a_j^{2q-1}}{(\theta - |b_{jk}|)^{2q-2}}. \]
\end{proof}

\begin{remark}
	The assumption $|\psi(u)| \le C |u|^{-q}$ is satisfied for various filters. For example, by  the Paley-Wiener-Schwartz theorem  \cite[Theorem 7.2.2]{Strichartz}, if $\widehat \psi(\cdot)\in C^\infty$ has a compact support,  then for any $n\in \mathbb{N}$ there exists a constant $C_n$ such that $| \psi(u)|\le C_n (1+|u|)^{-n}.$
\end{remark}

The previous result implies the following lemma.
\begin{lemma}\label{lem6}
	Let the conditions of Lemma~{\rm\ref{tails for d}}  be satisfied, and for some $s\in (0,2]$ and $\delta_0>0$ it holds $\mathrm{B} (0)-\mathrm{B} (t)\le C_1t^s$  for all $t \in [0,\delta_0]$, where the constant $C_1$ only depends on $s$ and $\delta_0$. Then, there is a constant $C_2 > 0$ such that, for all $j\in\mathbb{N}$, for each $\delta\in [0,\delta_0]$, and for every $\theta >\max\{\tilde{b}_{j},\delta_0\}$, one has 
	\[\mathbb{E}\left({d}_{j.}^{(2, \theta, \delta)} - {d}_{j.}^{(2)} \right)^2\le  {C_2}  \left(\theta^2 \delta^{s} +  \frac{\theta a_j^{2q - 1}}{m_j}\sum_{k=1}^{m_j}\frac{1}{(\theta-|b_{jk}|)^{2q - 2}}\right),\]
    where $\tilde{b}_{j}=\max_{k\in \{1,\dots,m_j\}}{|b_{jk}}|.$
\end{lemma}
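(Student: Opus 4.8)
The plan is to control the deviation at the level of the individual filter coefficients and then lift the estimate to the averaged statistics. Write $\zeta_k := d^{(\theta)}_{jk} - d_{jk}$ for the truncation error (already controlled by Lemma~\ref{tails for d}) and $\eta_k := d^{(\theta,\delta)}_{jk} - d^{(\theta)}_{jk}$ for the genuinely new discretization error, so that the total deviation is $\xi_k := d^{(\theta,\delta)}_{jk} - d_{jk} = \eta_k + \zeta_k$. Since all the coefficients are centered and jointly Gaussian, I would first expand the difference of averaged squares as
\[
d^{(2,\theta,\delta)}_{j.} - d^{(2)}_{j.} = \frac{1}{m_j}\sum_{k=1}^{m_j}\big(2 d_{jk}\xi_k + \xi_k^2\big),
\]
and bound its second moment by $2\,\mathbb{E}\big(\tfrac{2}{m_j}\sum_k d_{jk}\xi_k\big)^2 + 2\,\mathbb{E}\big(\tfrac{1}{m_j}\sum_k \xi_k^2\big)^2$. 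The first (cross) term is the dominant one, since the $d_{jk}$ are of order $\sqrt{J(a_j)}$ while the $\xi_k$ are small; the second term is quadratic in the $\xi_k$ and will be shown to be of strictly smaller order.

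The heart of the argument is the new discretization estimate for $\eta_k$. Writing $\eta_k$ as a single integral over $[-\theta,\theta]$ of $\psi((t-b_{jk})/a_j)$ against the increments $X(\delta l) - X(t)$ on each cell $I_l = [\max(-\theta,l\delta),\min(\theta,(l+1)\delta)]$, the key input is the local regularity hypothesis: since $|t-\delta l|\le \delta\le\delta_0$ on $I_l$, one has $\mathbb{E}(X(\delta l)-X(t))^2 = 2\big(\mathrm{B}(0)-\mathrm{B}(t-\delta l)\big)\le 2C_1\delta^{s}$. Feeding this into the double integral defining $\mathbb{E}\eta_k^2$ through the Cauchy--Schwarz bound on the covariance kernel, and bounding $\int_{-\theta}^{\theta}|\psi((t-b_{jk})/a_j)|\,dt$ by $2\theta\max|\psi|$, produces a contribution of order $\theta^2\delta^s$, with the factor $1/a_j$ absorbed since $a_j\ge a_1$. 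The same increment estimate controls the cross covariance $\mathbb{E}[d_{jk}\eta_k]$, which is what actually enters the dominant term through $\big(\tfrac1{m_j}\sum_k\mathbb{E}[d_{jk}\eta_k]\big)^2$.

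For the truncation part I would reuse Lemma~\ref{tails for d} to obtain $\mathbb{E}\zeta_k^2\le C_1 a_j^{2q-1}(\theta-|b_{jk}|)^{-(2q-2)}$, and control the relevant covariances by Cauchy--Schwarz, e.g. $|\mathbb{E}[d_{jk}\zeta_k]|\le \sqrt{J(a_j)}\,\|\zeta_k\|$. Here $J(a_j)$ is uniformly bounded in $j$: because $\widehat\psi$ is supported in $[-A,A]$, the substitution $\mu=a_j\lambda$ gives $J(a_j)=\int_{|\mu|\le A}|\widehat\psi(\mu)|^2 f(\mu/a_j)\,d\mu \to f(0)\,c_2$ as $a_j\to\infty$. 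Applying the elementary inequality $(\sum_k x_k)^2 \le m_j \sum_k x_k^2$ over the $m_j$ coefficients, together with the spacing condition $|b_{jk_1}-b_{jk_2}|\ge|k_1-k_2|\gamma_j$ that governs the decay of the covariances $\mathbb{E}[d_{jk}d_{jk'}]$, then assembles the truncation contributions into the announced averaged tail term $\tfrac{\theta a_j^{2q-1}}{m_j}\sum_k (\theta-|b_{jk}|)^{-(2q-2)}$; this part runs parallel to the corresponding truncation estimate already established in the continuous setting of \cite{AAFO}. Collecting all bounded prefactors ($\|\psi\|_1$, $\max|\psi|$, $\sup_j J(a_j)$, $1/a_1$) into a single constant $C_2$ gives the claim.

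The step I expect to be the main obstacle is the rigorous bookkeeping of the cross term $\mathbb{E}\big(\tfrac1{m_j}\sum_k d_{jk}\xi_k\big)^2$. Because the $d_{jk}$ are ``large'' and the $\xi_k$ are ``small'', one must exploit the Gaussian (Isserlis/Wick) structure to separate the genuinely dominant contribution $\big(\tfrac1{m_j}\sum_k\mathbb{E}[d_{jk}\xi_k]\big)^2$ from the off-diagonal covariance contributions of the form $\mathbb{E}[d_{jk}d_{jk'}]\,\mathbb{E}[\xi_k\xi_{k'}]$, and to verify that the latter, summed over $k,k'$ and regulated by the spacing $\gamma_j$, do not degrade the stated rate. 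Tracking exactly which manipulations generate the factor $\theta^2$ (the crude bound on $\int_{-\theta}^{\theta}|\psi|$) as opposed to the averaging factor $1/m_j$ (the Cauchy--Schwarz over $k$), so that the two terms of the announced bound emerge with the correct powers of $\theta$, $a_j$, and $m_j$, is the delicate part of the computation.
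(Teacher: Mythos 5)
Your ingredients are the right ones---the split $\xi_k=\eta_k+\zeta_k$ into discretization and truncation errors, the increment bound $\mathbb{E}(X(t)-X(\delta l))^2\le 2C_1\delta^s$ fed through Cauchy--Schwarz, the reuse of Lemma~\ref{tails for d}, and Gaussianity to reduce fourth moments to second moments---but your global assembly has two genuine gaps, both of which the paper's proof is organized specifically to avoid. The paper does not expand $(d_{jk}+\xi_k)^2-d_{jk}^2=2d_{jk}\xi_k+\xi_k^2$ and square sums over $k$; instead it first applies Jensen's inequality to the average, $\mathbb{E}\big(\tfrac{1}{m_j}\sum_k Y_k\big)^2\le\tfrac{1}{m_j}\sum_k\mathbb{E}Y_k^2$ with $Y_k=\big(d_{jk}^{(\theta,\delta)}\big)^2-d_{jk}^2$, which kills all $k\neq k'$ cross terms at the outset. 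It then factors $Y_k=\big(d_{jk}^{(\theta,\delta)}-d_{jk}\big)\big(d_{jk}^{(\theta,\delta)}+d_{jk}\big)$, applies Cauchy--Schwarz and the identity $\mathbb{E}Z^4=3(\mathbb{E}Z^2)^2$ for centered Gaussian $Z$ to get $\mathbb{E}Y_k^2\le 3\,\mathbb{E}\big(d_{jk}^{(\theta,\delta)}-d_{jk}\big)^2\,\mathbb{E}\big(d_{jk}^{(\theta,\delta)}+d_{jk}\big)^2$, and bounds the second factor crudely by $2\big(\mathbb{E}\big(d_{jk}^{(\theta,\delta)}\big)^2+\mathbb{E}d_{jk}^2\big)\le C\big(\theta\|\psi\|^2+c_4\big)$. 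So the step you flag as ``the main obstacle''---Wick bookkeeping of $\mathbb{E}[d_{jk}\xi_k d_{jk'}\xi_{k'}]$ regulated by the spacing $\gamma_j$---never arises; in fact the spacing condition plays no role in this lemma at all (it enters only in Lemma~\ref{lemma_3}, through \cite[Lemma~3]{AAFO}). Had you wanted to push your route through, no covariance decay is needed either: each Wick pairing is bounded by plain Cauchy--Schwarz, $|\mathbb{E}[d_{jk}\xi_{k'}]|\le\sqrt{\mathbb{E}d_{jk}^2}\sqrt{\mathbb{E}\xi_{k'}^2}$, and then $\big(\sum_k x_k\big)^2\le m_j\sum_k x_k^2$ yields the averaged form of the bound. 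Leaving this step as an announced obstacle, with the incorrect suggestion that it requires controlling $\mathbb{E}[d_{jk}d_{jk'}]$ via $\gamma_j$, is the first gap.

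The second gap is the claim that the quadratic term $\mathbb{E}\big(\tfrac{1}{m_j}\sum_k\xi_k^2\big)^2$ ``will be shown to be of strictly smaller order.'' After Jensen and the Gaussian fourth-moment identity this term equals at most $\tfrac{3}{m_j}\sum_k\big(\mathbb{E}\xi_k^2\big)^2$, and squaring your own estimate $\mathbb{E}\xi_k^2\lesssim\theta^2\delta^s+a_j^{2q-1}(\theta-|b_{jk}|)^{-(2q-2)}$ produces $\theta^4\delta^{2s}$ and the square of the truncation term. Neither is dominated by the claimed bound uniformly over the range the lemma asserts (all $\delta\in[0,\delta_0]$ and all $\theta>\max\{\tilde b_j,\delta_0\}$): the lemma is a non-asymptotic statement, so there is no regime in which $\theta^2\delta^s$ is known to be small, and $a_j^{2q-1}(\theta-|b_{jk}|)^{-(2q-2)}$ can be arbitrarily large when $\theta$ is close to $\tilde b_j$. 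The fix is precisely the paper's device: pair one factor of the error with the crude uniform bound $\mathbb{E}\xi_k^2\le 2\mathbb{E}\big(d_{jk}^{(\theta,\delta)}\big)^2+2\mathbb{E}d_{jk}^2\le C\big(\theta\|\psi\|^2+c_4\big)$, never with a second copy of the error. In short, your proposal is completable, but only after replacing both of its load-bearing unproved steps by these elementary pairings; as written it does not yet constitute a proof.
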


\begin{proof} 	Note, that $d_{jk}$ and~${d}_{jk}^{(\theta,\delta)}$ are Gaussian and $\mathbb{E}d_{jk} = \mathbb{E}{d}_{jk}^{(\theta,\delta)} = 0.$ Hence, by  Hölder's inequality and as $\mathbb{E}X^4 = 3\sigma^4$  for any Gaussian random variable $X \sim N(0, \sigma^2),$ it follows from the definitions of ${d}_{j.}^{(2, \theta, \delta)} $ and~${d}_{j.}^{(2)}$ that
	\begin{equation}
		\begin{aligned}[b]
			& \mathbb{E}\left({d}_{j.}^{(2, \theta, \delta)} - {d}_{j.}^{(2)} \right)^2= \mathbb{E} \left( \frac{1}{m_j} \sum_{k=1}^{m_j}\Big(\left({d}_{jk}^{( \theta, \delta)}\right)^2 - d_{jk}^2 \Big)\right)^2 \\
			&\quad\quad \le \frac{1}{m_j}\mathbb{E} \sum_{k=1}^{m_j}\left(\left({d}_{jk}^{( \theta, \delta)}\right)^2 - d_{jk}^2 \right)^2  =  \frac{1}{m_j}\mathbb{E}\left( \sum_{k=1}^{m_j}\left({d}_{jk}^{( \theta, \delta)} - d_{jk}\right)^2\right.\\
			&\quad\quad \times \left. \left({d}_{jk}^{( \theta, \delta)}+ d_{jk}\right)^2 \right) \le  \frac{1}{m_j} \sum_{k=1}^{m_j} \sqrt{\mathbb{E} \left({d}_{jk}^{( \theta, \delta)}- d_{jk}\right)^4 } \sqrt{\mathbb{E} \left({d}_{jk}^{( \theta, \delta)}+ d_{jk}\right)^4 } \\
			&\quad\quad =  \frac{3}{m_j} \sum_{k=1}^{m_j} \sqrt{\left(Var \left({d}_{jk}^{( \theta, \delta)}- d_{jk}\right) \right)^2 } \sqrt{\left(Var \left({d}_{jk}^{( \theta, \delta)}+ d_{jk}\right) \right)^2 } \\
			&\quad\quad = \frac{3}{m_j} \sum_{k=1}^{m_j} \mathbb{E} \left({d}_{jk}^{( \theta, \delta)}- d_{jk}\right)^2  \mathbb{E} \left({d}_{jk}^{( \theta, \delta)}+ d_{jk}\right)^2 \\
			&\quad\quad \le \frac{6}{m_j} \sum_{k=1}^{m_j}  \mathbb{E} \left({d}_{jk}^{( \theta, \delta)}- d_{jk}\right)^2  \left( \mathbb{E} \left({d}_{jk}^{(\theta,\delta)}\right)^2 + \mathbb{E} d_{jk}^2\right).\label{estim}
		\end{aligned}
	\end{equation}
	
	Notice that by Lemma~\ref{tails for d}
	\begin{equation}
		\begin{aligned}[b]
			\mathbb{E} \left({d}_{jk}^{(\theta,\delta)} - d_{jk}\right)^2 & = \mathbb{E} \Big( \left({d}_{jk}^{(\theta,\delta)} - {d}_{jk}^{(\theta)}  \right)   + \left({d}_{jk}^{(\theta)}  -  d_{jk}  \right)\Big)^2  \\
			& \le  2 \mathbb{E}  \left({d}_{jk}^{(\theta,\delta)} - {d}_{jk}^{(\theta)} \right) ^2 + 2 \mathbb{E}  \left( {d}_{jk}^{(\theta)}  -  d_{jk}  \right)^2  \\
			& \le 2  \mathbb{E}  \left({d}_{jk}^{(\theta,\delta)} - {d}_{jk}^{(\theta)} \right) ^2 + C_1 \frac{a_j^{2q - 1}}{(\theta-|b_{jk}|)^{2q - 2}}.\label{est1}
		\end{aligned}
	\end{equation}

		The first summand in (\ref{est1}) can be bounded as
				    \begin{align*}
				&\mathbb{E}  \left({d}_{jk}^{(\theta,\delta)} - {d}_{jk}^{(\theta)} \right) ^2 = \frac{1}{a_j}\mathbb{E}  \left(\sum_{l=[-\theta/\delta]}^{[\theta/\delta]}\int_{\max(-\theta,l\delta)}^{\min(\theta,(l+1)\delta)}\psi\left(\frac{t - b_{jk}}{a_j}\right)(X(t)-X(\delta l))dt\right)^2\\
				&\le \frac{C}{a_j}\left[\frac{\theta}{\delta}\right]\sum_{l=[-\theta/\delta]}^{[\theta/\delta]}\mathbb{E}\left(\int_{\max(-\theta,l\delta)}^{\min(\theta,(l+1)\delta)}\psi\left(\frac{t - b_{jk}}{a_j}\right)(X(t)-X(\delta l))dt\right)^2\\
				&\le \frac{C}{a_j}\left[\frac{\theta}{\delta}\right]\sum_{l=[-\theta/\delta]}^{[\theta/\delta]}\int_{\max(-\theta,l\delta)}^{\min(\theta,(l+1)\delta)}\psi^2\left(\frac{t - b_{jk}}{a_j}\right)dt\, \mathbb{E}\int_{\max(-\theta,l\delta)}^{\min(\theta,(l+1)\delta)} (X(t)-X(\delta l))^2dt\\
				&\le \frac{C}{a_j}\left[\frac{\theta}{\delta}\right]\int_{\mathbb{R}}\psi^2\left(\frac{t - b_{jk}}{a_j}\right)dt\, \int_{0}^{\delta} (\mathrm{B} (0)-\mathrm{B} (t))dt\le C\theta||\psi||^2 {\delta}^{s}.
  \end{align*}

		Thus, it follows from (\ref{est1})  that
		\begin{equation}\label{first}
			\mathbb{E} \left({d}_{jk}^{(\theta,\delta)} - d_{jk}\right)^2  \le   C\theta||\psi||^2 {\delta}^{s} + C_1 \frac{a_j^{2q - 1}}{(\theta-|b_{jk}|)^{2q - 2}}.
		\end{equation}
		
		Lemma~2 in~\cite{AAFO} gives the next upper bound of the $\mathbb{E} d_{jk}^2$ in (\ref{estim}),
		\begin{equation}\label{last} \mathbb{E} d_{jk}^2 = Cov\left( d_{jk}, d_{jk} \right) \le c_4(s_0, \alpha), \end{equation}
		where
		\[c_4(s_0, \alpha) := \frac{2}{s_0^{4\alpha}}  \max\left( \frac{2 c_2(1 + c_1)}{3}, c_3\, V_{-1/2}^{1/2} (h)+ (1 + c_1) \, V_{-A}^A\left(|\widehat{\psi} |^2\right) \right)\]
		and $V_a^b(f)$ denotes the total variation of a function $f(\cdot)$ on an interval $[a, b].$
		
		Applying H\"older's inequality one obtains the following upper bound for the $\mathbb{E} \left({d}_{jk}^{(\theta,\delta)}\right)^2$ in (\ref{estim})
		\begin{equation}
			\begin{aligned}[b]
				& \mathbb{E} \left({d}_{jk}^{(\theta,\delta)}\right)^2  = \frac{1}{a_j}\mathbb{E}  \left(\sum_{l=[-\theta/\delta]}^{[\theta/\delta]}\int_{\max(-\theta,l\delta)}^{\min(\theta,(l+1)\delta)}\psi\left(\frac{t - b_{jk}}{a_j}\right)X(\delta l)dt\right)^2\\
				&\quad \le \frac{C}{a_j}\left[\frac{\theta}{\delta}\right]\sum_{l=[-\theta/\delta]}^{[\theta/\delta]}\mathbb{E}\left(\int_{\max(-\theta,l\delta)}^{\min(\theta,(l+1)\delta)}\psi\left(\frac{t - b_{jk}}{a_j}\right)X(\delta l)dt\right)^2\\
				&\quad\le \frac{C}{a_j}\left[\frac{\theta}{\delta}\right]\sum_{l=[-\theta/\delta]}^{[\theta/\delta]}\int_{\max(-\theta,l\delta)}^{\min(\theta,(l+1)\delta)}\psi^2\left(\frac{t - b_{jk}}{a_j}\right)dt\, \int_{\max(-\theta,l\delta)}^{\min(\theta,(l+1)\delta)} \mathrm{B} (0)dt\\
				& \quad
				\le C\frac{\theta \mathrm{B} (0)}{a_j}\int_{\mathbb{R}}\psi^2\left(\frac{t - b_{jk}}{a_j}\right)dt\le C_3\theta||\psi||^2 . \label{seclast}
			\end{aligned}
		\end{equation}

		Now, by applying the upper bounds from (\ref{first}),  (\ref{last})  and (\ref{seclast})  to the terms in~(\ref{estim}) one obtains
		\begin{eqnarray}
			\mathbb{E}\left({d}_{j.}^{(2, \theta, \delta)} - {d}_{j.}^{(2)} \right) &\le &  \frac{6}{m_j}  \sum_{k=1}^{m_j}\left(C\theta||\psi||^2 \delta^{s} +   C_1\frac{a_j^{2q - 1}}{(\theta-|b_{jk}|)^{2q - 2}}\right) \nonumber \\
			&  &\times \left( c_4(s_0, \alpha) + C_3 \theta||\psi||^2 \right). \nonumber
		\end{eqnarray}

		Finally, the result of the lemma follows by combining the constants and replacing lesser terms with dominant ones.
	\end{proof}
	
	\begin{remark} Covariance functions satisfying the assumption of Lemma~\ref{lem6} have been widely used in the stochastic processes literature. Their origins can be traced back to the foundational publication \cite{Pit}. The asymptotic behaviour of the covariance function $\mathrm{B} (t)$ at the origin can be obtained from the asymptotic properties of the spectrum $f(\lambda)$ of $X(t)$ at the infinity, see, for example, Abelian and Tauberian theorems in \cite{Bing, LeonOle:2013}. It is important to note that these theorems require conditions on the asymptotic behaviour of the spectrum at infinity, but the considered model only specifies the singular behaviour of the spectrum at a fixed finite location. These two assumptions are unrelated and allow for the existence of a wide class of feasible processes.
	\end{remark}
	
	Let $\{\theta_j\},$  $\{\delta_j\},$ $\{r_{j}\}\subset \mathbb{R_{+}},$ $j\in \mathbb{N},$ be positive-valued sequences, such that   $\theta_j\to +\infty,$  $\delta_j\to 0,$  $r_{j}\to 0,$  when $j\to +\infty.$ Also we assume that $\{\theta_j\}$ is an increasing sequence, while $\{\delta_j\}$ and $\{r_{j}\}$ are decreasing sequences.
	
	The following result is the generalisation of \cite[Lemma~4]{AAFO} to the considered discrete case.
	
	\begin{lemma}\label{lemma_3}
		Let the conditions of Lemmas~{\rm\ref{tails for d} }and~{\rm\ref{lem6}} be satisfied with   $q>3/2$ and ${a_{j}}\geq {2A}$ for all $j\in\mathbb{N}.$   Let us choose such $\{m_{j}\}\subset \mathbb{N}$ that  \begin{equation}\label{cond1}
			\sum_{j=1}^{\infty} \frac{1}{r^{2}_{j}m_{j}}<+\infty\quad \mbox{and} \quad \displaystyle\sum_{j=1}^{\infty} \frac{a_{j}^{2}} {r_{j}^{2} \gamma_{j}^{2} m_{j} }<+\infty,
		\end{equation}
		and such $\{\theta_j\}$ and $\{\delta_j\}$       that
		\begin{equation}\label{cond2}
			\sum_{j=1}^{\infty} \frac{\theta_j^2 \delta_j^{s}}{r_{j}^{2}} < +\infty \quad \mbox{and} \quad   \sum_{j=1}^{\infty} \frac{\theta_j a_j^{2q - 1}}{r_{j}^2(\theta_j-\tilde{b}_{j})^{2q - 2}} < +\infty.
		\end{equation}
		Then, there exists an almost surely finite random variable $c_{5},$ such that for all $j\in \mathbb{N}$ it holds
		\[\left|{d}_{j.}^{(2, \theta_j, \delta_j)} -J(a_{j})\right|\leq c_{5}r_{j},\]
	   where 
			\[
			J(a_j)=\mathbb{E}|d(a_j, 0)|^2 = a_j \int_{\mathbb{R}} |\widehat{\psi}(a_j\lambda)|^2 f(\lambda) d\lambda.
			\]
	\end{lemma}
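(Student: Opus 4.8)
The plan is to reduce the statement to the continuous-sampling result of \cite[Lemma~4]{AAFO} via a triangle-inequality split, and then to control the new discretisation term by upgrading the second-moment bound of Lemma~\ref{lem6} to an almost sure bound through a Borel--Cantelli/Tonelli argument. Concretely, I would start from
\[\left|{d}_{j.}^{(2, \theta_j, \delta_j)} - J(a_{j})\right| \le \left|{d}_{j.}^{(2, \theta_j, \delta_j)} - {d}_{j.}^{(2)}\right| + \left|{d}_{j.}^{(2)} - J(a_{j})\right|,\]
observing that $J(a_j)=\mathbb{E}|d(a_j,0)|^2=\mathbb{E}d_{jk}^2$ is independent of $k$, so the second term is precisely the deviation of the continuous empirical average from its mean. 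Since the hypotheses include conditions (\ref{cond1}), which are exactly the summability requirements of the continuous case, the continuous-case statement of \cite[Lemma~4]{AAFO} supplies an almost surely finite random variable $c_5'$ with $\left|{d}_{j.}^{(2)} - J(a_{j})\right| \le c_5' r_j$ for all $j$. This disposes of the second summand directly.

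For the first (genuinely discrete) summand, set $Z_j := {d}_{j.}^{(2, \theta_j, \delta_j)} - {d}_{j.}^{(2)}$. I would first simplify the bound of Lemma~\ref{lem6} using $\theta_j > \tilde b_j \ge |b_{jk}|$, so that $\theta_j - |b_{jk}| \ge \theta_j - \tilde b_j$ and the averaged sum is dominated termwise, giving
\[\mathbb{E} Z_j^2 \le C_2\left(\theta_j^2 \delta_j^{s} + \frac{\theta_j a_j^{2q-1}}{(\theta_j - \tilde b_j)^{2q-2}}\right).\]
Dividing by $r_j^2$ and summing over $j$, the two resulting series are finite precisely by the two conditions in (\ref{cond2}). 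Hence, by Tonelli's theorem for nonnegative summands, $\mathbb{E}\sum_{j}Z_j^2/r_j^2 = \sum_j \mathbb{E}Z_j^2/r_j^2 < \infty$, so the nonnegative random variable $\sum_j Z_j^2/r_j^2$ is finite almost surely. Consequently $\tilde c := \big(\sum_j Z_j^2/r_j^2\big)^{1/2}$ is almost surely finite and dominates $\sup_j |Z_j|/r_j$, which yields $|Z_j| \le \tilde c\, r_j$ for every $j$ on a set of full probability.

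Combining the two bounds gives $\left|{d}_{j.}^{(2, \theta_j, \delta_j)} - J(a_{j})\right| \le (\tilde c + c_5')\, r_j$, so that $c_5 := \tilde c + c_5'$ is the desired almost surely finite random variable. I expect the only delicate point to be the passage from the $L^2$-estimate of Lemma~\ref{lem6} to a single constant valid simultaneously for all $j$: the interchange of expectation and sum via Tonelli is exactly what makes conditions (\ref{cond2}) the natural hypotheses, while the requirements $q>3/2$, $a_j\ge 2A$, together with $\theta_j\to\infty$ and $\delta_j\to 0$, are the inherited conditions guaranteeing the summed series actually converge. The remainder is bookkeeping, with the constants $C_2$ and $c_5'$ absorbed and the monotonicity of $\{\theta_j\}$, $\{\delta_j\}$, $\{r_j\}$ ensuring the series are well defined.
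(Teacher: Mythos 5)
Your proposal is correct, but it takes a genuinely different route from the paper's proof. The paper works at the level of second moments throughout: it applies Markov's inequality to the full deviation $\big|{d}_{j.}^{(2, \theta_j, \delta_j)} - J(a_{j})\big|$, splits $\mathbb{E}\big({d}_{j.}^{(2, \theta_j, \delta_j)} - J(a_{j})\big)^2$ into the discretisation error (bounded via Lemma~\ref{lem6}, with the same termwise domination $\theta_j-|b_{jk}|\geq \theta_j-\tilde{b}_j$ that you use) plus the variance of the continuous statistic (bounded via the variance estimate of \cite[Lemma~3]{AAFO}, which produces the term $C\big(1+\tfrac{\pi^2}{3}a_j^2/\gamma_j^2\big)/(r_j^2 m_j)$ and is where condition (\ref{cond1}) enters), and then finishes with a single Borel--Cantelli application to the combined probability series. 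You instead split the deviation itself by the triangle inequality, dispose of the continuous part by citing the almost sure conclusion of \cite[Lemma~4]{AAFO} directly --- legitimate, since the paper presents Lemma~\ref{lemma_3} as the discrete generalisation of that result and Remark~\ref{rem7} confirms that (\ref{cond1}) are exactly its hypotheses --- and upgrade the $L^2$ bound on $Z_j$ to an almost sure bound by Tonelli, taking $\tilde{c}=\big(\sum_j Z_j^2/r_j^2\big)^{1/2}$ as the dominating random variable. Your route is more modular (it isolates the genuinely new discrete ingredient and reuses the continuous conclusion rather than re-deriving it) and the Tonelli device yields an explicit a.s.\ finite constant valid for all $j$ simultaneously, avoiding the ``finitely many exceptional indices'' bookkeeping implicit in Borel--Cantelli; the paper's route is more self-contained, needing only the variance bound from \cite{AAFO} rather than its almost sure lemma, and treats both error sources in one probabilistic step. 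Both arguments hinge on the same decomposition and the same two summability conditions, so the difference is one of machinery rather than substance.
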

	
	\begin{proof}
		Using  Markov's inequality and \cite[Lemma~3]{AAFO}  we obtain
		\[ P\left(\left| {d}_{j.}^{(2, \theta_j, \delta_j)}  -J(a_{j})\right|>r_{j}\right) \leq \frac{\mathbb{E}\left({d}_{j.}^{(2, \theta_j, \delta_j)}  -J(a_{j})\right)^2}{r_{j}^{2}}\leq  \frac{2\mathbb{E}\left({d}_{j.}^{(2, \theta_j, \delta_j)}  -  {d}_{j.}^{(2)}\right)^2}{r_{j}^{2}}\]
		\[ +\frac{2\mathbb{E}\left( {d}_{j.}^{(2)} -J(a_{j})\right)^2}{r_{j}^{2}}\le \frac{C}{r_{j}^{2}m_j} \sum_{k=1}^{m_j} \left(\theta_j^2 \delta_j^{s} +  \frac{\theta_j a_j^{2q - 1}}{(\theta_j-|b_{jk}|)^{2q - 2}}\right)+ C\frac{\Big(1+\frac{\pi^{2}}{3}\frac{a_{j}^{2}}{\gamma_{j}^{2}}\Big)} { r_{j}^{2}m_{j}}\]
		\[ \le \frac{C}{r_{j}^{2}} \left(\theta_j^2 \delta_j^{s}+ \frac{\theta_j a_j^{2q - 1}}{(\theta_j-\tilde{b}_{j})^{2q - 2}}\right)+ C\frac{\Big(1+\frac{\pi^{2}}{3}\frac{a_{j}^{2}}{\gamma_{j}^{2}}\Big)} { r_{j}^{2}m_{j}}.\]
		
		By the choice of $\{m_{j}\},$ $\{\theta_j\}$ and $\{\delta_j\},$  and the conditions (\ref{cond1}) and (\ref{cond2})
		\[\displaystyle\sum_{j=1}^{\infty}P\left(\left|{d}_{j.}^{(2, \theta_j, \delta_j)}-J(a_{j})\right|>r_{_{j}}\right)<+\infty.\]
		Hence, the required statement follows by the Borel–Cantelli lemma.
	\end{proof}
	
	\begin{remark}\label{rem7}
		Notice that the conditions in (\ref{cond1}) coincide with those established in \cite{AAFO} and do not involve $\{\theta_j\}$ and $\{\delta_j\}.$ The detailed discussion and examples of sequences satisfying these conditions are provided in \cite{AAFO}.   In contrast, the conditions in (\ref{cond2}) encompass the sequences $\{\theta_j\},$ $\{\delta_j\},$ and $\{\tilde{b}_j\}.$ To ensure convergence of the second series in (\ref{cond2}), the sequence $\{\theta_j\}$ must have a considerably faster rate of increase than that of $\{\tilde{b}_j\}.$ Subsequently, an appropriate rate of decay for the sequence $\{\delta_j\}$ must be chosen to make the first series in (\ref{cond2}) finite.
	\end{remark}
	
	Now, we obtain the following analogue of \cite[Proposition~1]{AAFO} for the discrete-time sampling of $X(t).$
	
	\begin{theorem}\label{pro_1}
		Let the conditions of Lemma {\rm{\ref{lemma_3}}} be satisfied. Then, it holds that ${d}_{j.}^{(2, \theta_j, \delta_j)}\xrightarrow{a.s.} c_{2} s_{0}^{-4\alpha},$ when $j\to +\infty.$  Moreover, there exists an almost surely finite random variable $C$ such that  for all $j\in \mathbb{N}$
		\[
		\Big|{d}_{j.}^{(2, \theta_j, \delta_j)}- c_{2}s_{0}^{-4\alpha} \Big|\leq C\max(r_{j}, a_{j}^{-2}).
		\]
	\end{theorem}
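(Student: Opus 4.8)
The plan is to split the error by the triangle inequality into a stochastic fluctuation, already controlled by Lemma~\ref{lemma_3}, and a purely deterministic bias, and then to handle the latter by a Taylor expansion of the spectral density near the origin. Concretely, I would write
\[
\Big|{d}_{j.}^{(2, \theta_j, \delta_j)}- c_{2}s_{0}^{-4\alpha}\Big| \le \Big|{d}_{j.}^{(2, \theta_j, \delta_j)}- J(a_j)\Big| + \Big|J(a_j) - c_{2}s_{0}^{-4\alpha}\Big|.
\]
Under the hypotheses of Lemma~\ref{lemma_3} the first summand is at most $c_5 r_j$ for an almost surely finite random variable $c_5$, so only the deterministic quantity $\big|J(a_j) - c_2 s_0^{-4\alpha}\big|$ remains, and the task reduces to showing it is $O(a_j^{-2})$. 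Since $J(a_j)$ does not depend on the discretisation or truncation, this step coincides with the continuous-case analysis underlying \cite[Proposition~1]{AAFO}, and I would reproduce it as follows.

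To estimate the bias I would first change variables $\mu = a_j\lambda$ to obtain $J(a_j) = \int_{\mathbb{R}} |\widehat\psi(\mu)|^2 f(\mu/a_j)\,d\mu$ and, using $c_2 = \int_{\mathbb{R}}|\widehat\psi(\mu)|^2\,d\mu$, rewrite $J(a_j) - c_2 s_0^{-4\alpha} = \int_{\mathbb{R}} |\widehat\psi(\mu)|^2\big(f(\mu/a_j) - s_0^{-4\alpha}\big)\,d\mu$. Because ${\rm supp}\,\widehat\psi \subset [-A,A]$ and $a_j \ge 2A$, the argument satisfies $|\mu/a_j| \le 1/2 < 1 < s_0$, so over the entire range of integration $f(\mu/a_j) = h(\mu/a_j)\big(s_0^2 - (\mu/a_j)^2\big)^{-2\alpha}$ stays bounded away from the spectral poles at $\pm s_0$, and both factors can be expanded about the origin. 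By Assumption~\ref{Assumption_1} (namely $h(0)=1$ and $h^{(i)}(0)=0$ for $i=1,\dots,4$) together with the continuity, hence boundedness, of $h^{(4)}$ on $[0,1/2]$ as reflected in $c_1$, Taylor's theorem gives $|h(\mu/a_j)-1|\le C a_j^{-4}$, while the binomial expansion yields $\big(s_0^2-(\mu/a_j)^2\big)^{-2\alpha} = s_0^{-4\alpha} + 2\alpha\, s_0^{-4\alpha-2}\,\mu^2/a_j^2 + O(a_j^{-4})$, uniformly for $|\mu|\le A$.

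Multiplying the two expansions gives $f(\mu/a_j) - s_0^{-4\alpha} = 2\alpha\, s_0^{-4\alpha-2}\,\mu^2/a_j^2 + O(a_j^{-4})$ uniformly on $[-A,A]$; integrating against $|\widehat\psi(\mu)|^2$ and using $\int_{-A}^{A}|\widehat\psi(\mu)|^2\mu^2\,d\mu \le A^2 c_2 < \infty$ then produces $\big|J(a_j) - c_2 s_0^{-4\alpha}\big| \le C a_j^{-2}$. Combining this with the Lemma~\ref{lemma_3} estimate yields $\big|{d}_{j.}^{(2, \theta_j, \delta_j)}- c_2 s_0^{-4\alpha}\big| \le c_5 r_j + C a_j^{-2} \le \tilde C \max(r_j, a_j^{-2})$ for an almost surely finite $\tilde C$, which is the quantitative claim; since $r_j \to 0$ and $a_j \to +\infty$, the right-hand side tends to $0$ and the almost sure convergence follows. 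I expect the only delicate point to be the uniform control of the Taylor remainders over the compact support of $\widehat\psi$ — in particular, verifying through $a_j \ge 2A$ that $\mu/a_j$ never approaches the singularities $\pm s_0$, and that the flatness conditions on $h$ at the origin push its contribution down to order $a_j^{-4}$, so that the genuine leading order of the bias is the $a_j^{-2}$ term produced by the singularity factor; the whole stochastic side is absorbed into the already-established Lemma~\ref{lemma_3}.
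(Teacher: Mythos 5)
Your proposal is correct and follows essentially the same route as the paper: the paper's proof is exactly the triangle-inequality split into the stochastic term controlled by Lemma~\ref{lemma_3} and the deterministic bias $\left|J(a_j)-c_2 s_0^{-4\alpha}\right|\le C/a_j^2$, the latter quoted from Lemma~5 of \cite{AAFO} under the condition $a_j/A\ge 2$. The only difference is that you re-derive that cited bias bound (change of variables, compact support of $\widehat\psi$, Taylor/binomial expansion exploiting the flatness of $h$ at the origin) rather than invoking it, which is a valid reconstruction of the same estimate.
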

	
	\begin{proof} The proof closely follows the steps outlined in \cite[Proposition~1]{AAFO} as  Lemma~\ref{lemma_3} established an analogous upper limit for the discrete version of the transform ${d}_{j.}^{(2, \theta_j, \delta_j)}$ as for its continuous counterpart in \cite{AAFO}. By applying Lemma~\ref{lemma_3} and the result from \cite[Lemma~5]{AAFO}, which states that
		for ${a_{j}}/{A}\geq2$ it holds $\left|J(a_{j})- c_{2} s_{0}^{-4\alpha}\right|\leq {C}/{a^2_{j}},$  one obtains the theorem's proposition.
	\end{proof}

	\section{Second Discrete Statistics}\label{section 2nd statistic}
	
	Results in this and the next sections are based on the results from Section~\ref{section 1st statistic} and are obtained analogously to \cite[Sections~4,~5]{AAFO}.  The estimates presented in Section~\ref{section 1st statistic} align with those in \cite{AAFO}, but have different constants and assumptions. Consequently, similar proofs were not included in this and the following sections to preclude redundancy. All notations have been adjusted to reflect the considered case, with comments provided where necessary to clarify the modifications.
	
	In this section, we introduce another statistics which allow us to estimate $ \alpha s_{0}^{-4\alpha -2}.$ Let us define 
	\begin{equation}\label{second_discr_stat}
		\Delta {d}_{j.}^{(2, \theta_j, \delta_j)}: =  \frac{{d}_{j.}^{(2, \theta_j, \delta_j)}-{d}_{j+1.}^{(2, \theta_{j+1}, \delta_{j+1})}} {a_{j}^{-2} -a_{j+1}^{-2}   }.
	\end{equation} 
	 Then the following result holds.
		
	\begin{theorem}\label{pro_2}
		Let the assumptions of Lemma {\rm \ref{lemma_3}} hold true and there exist $\varepsilon>0$ and $j_{0}\in \mathbb{N}$ such that $a_{j+1}\geq(1+\varepsilon) a_{j}$ for all $j\geq j_{0}.$  Then
		
		\begin{align*}
			\Delta {d}_{j.}^{(2, \theta_j, \delta_j)}  \xrightarrow{a.s.} \alpha  c_{3} s_{0}^{-4\alpha-2}, \quad j \to+\infty.
		\end{align*}
		Moreover, there exists an almost surely finite random variable $c_{6}$ such that  for all $j\in \mathbb{N}$ it holds 
		\[
			\Big| \Delta {d}_{j.}^{(2, \theta_j, \delta_j)} - \alpha  c_{3} s_{0}^{-4\alpha-2}  \Big| \leq   c_{6} \max\Big (a_{j}^{2} r_{j}, a_{j}^{-2}\Big). 
		\] 
	\end{theorem}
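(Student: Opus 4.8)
The plan is to mirror the argument for \cite[Proposition~2]{AAFO}, splitting the centred difference quotient into a stochastic fluctuation controlled by Lemma~\ref{lemma_3} and a deterministic bias controlled by the two-term asymptotic expansion of $J(\cdot)$. First I would insert $\pm J(a_{j})$ and $\pm J(a_{j+1})$ into the numerator of (\ref{second_discr_stat}) to obtain
\[
\Delta {d}_{j.}^{(2, \theta_j, \delta_j)} - \alpha c_{3}s_{0}^{-4\alpha-2} = S_{j} + D_{j},
\]
where
\[
S_{j}:=\frac{\big({d}_{j.}^{(2, \theta_j, \delta_j)}-J(a_{j})\big)-\big({d}_{j+1.}^{(2, \theta_{j+1}, \delta_{j+1})}-J(a_{j+1})\big)}{a_{j}^{-2}-a_{j+1}^{-2}},\qquad
D_{j}:=\frac{J(a_{j})-J(a_{j+1})}{a_{j}^{-2}-a_{j+1}^{-2}}-\alpha c_{3}s_{0}^{-4\alpha-2}.
\]
Before bounding these terms I would record a lower bound on the denominator: for $j\ge j_{0}$ the hypothesis $a_{j+1}\ge(1+\varepsilon)a_{j}$ gives $a_{j+1}^{-2}\le(1+\varepsilon)^{-2}a_{j}^{-2}$, hence $a_{j}^{-2}-a_{j+1}^{-2}\ge\kappa\,a_{j}^{-2}$ with $\kappa:=1-(1+\varepsilon)^{-2}>0$.

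To bound the fluctuation $S_{j}$ I would apply Lemma~\ref{lemma_3} to both indices. Since $\{r_{j}\}$ is decreasing, the numerator is at most $c_{5}r_{j}+c_{5}r_{j+1}\le 2c_{5}r_{j}$, so dividing by the denominator bound gives $|S_{j}|\le(2c_{5}/\kappa)\,a_{j}^{2}r_{j}$ for $j\ge j_{0}$. Here $c_{5}$ is the almost surely finite random variable furnished by Lemma~\ref{lemma_3}, so $S_j$ is $O(a_{j}^{2}r_{j})$ with an almost surely finite constant. This is the only place where the discretisation enters: the bound of Lemma~\ref{lemma_3} is exactly the discrete analogue of its continuous counterpart in \cite{AAFO}, so nothing further specific to the sampling is required.

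For the deterministic bias $D_{j}$ I would invoke the expansion of $J(\cdot)$ underlying \cite[Lemma~5]{AAFO}: the change of variables $u=a\lambda$ turns $J(a)$ into $\int_{\mathbb{R}}|\widehat{\psi}(u)|^{2}\,h(u/a)\,|(u/a)^{2}-s_{0}^{2}|^{-2\alpha}\,du$, and the constraint $a\ge 2A$ confines $u/a$ to a neighbourhood of $0$ on $\operatorname{supp}\widehat\psi$, permitting a Taylor expansion of the integrand. The conditions $h^{(i)}(0)=0$, $i=1,\dots,4$, render the contribution of $h$ negligible at order $a^{-2}$, while expanding $|(u/a)^{2}-s_{0}^{2}|^{-2\alpha}$ about the origin yields $J(a)=c_{2}s_{0}^{-4\alpha}+\alpha c_{3}s_{0}^{-4\alpha-2}a^{-2}+R(a)$ with $|R(a)|\le C a^{-4}$. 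The constant term cancels in $J(a_{j})-J(a_{j+1})$, the linear term reproduces exactly $\alpha c_{3}s_{0}^{-4\alpha-2}$, and $|R(a_{j})-R(a_{j+1})|\le 2Ca_{j}^{-4}$ gives $|D_{j}|\le(2C/\kappa)\,a_{j}^{-2}$. Combining the two estimates and enlarging the constant to absorb the finitely many indices $j<j_{0}$ (for each of which $\Delta {d}_{j.}^{(2, \theta_j, \delta_j)}$ is a finite combination of almost surely finite variables) produces $|\Delta {d}_{j.}^{(2, \theta_j, \delta_j)}-\alpha c_{3}s_{0}^{-4\alpha-2}|\le c_{6}\max(a_{j}^{2}r_{j},a_{j}^{-2})$ with $c_{6}$ almost surely finite. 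The almost sure convergence then follows because the right-hand side tends to zero, which needs $a_{j}^{2}r_{j}\to 0$ in addition to $a_{j}^{-2}\to 0$; the former is guaranteed by the rate requirements relating $\{r_{j}\}$ to $\{a_{j}\}$ as in \cite{AAFO}.

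I expect the main obstacle to be the division by the vanishingly small denominator $a_{j}^{-2}-a_{j+1}^{-2}\asymp a_{j}^{-2}$, which amplifies both error sources by the factor $a_{j}^{2}$. This renders first-order information insufficient on two fronts. On the deterministic side, the crude bound $|J(a)-c_{2}s_{0}^{-4\alpha}|\le C a^{-2}$ of \cite[Lemma~5]{AAFO} would only give $O(1)$ after division, so the genuinely second-order remainder estimate $|R(a)|\le Ca^{-4}$ is indispensable and must be extracted from the smoothness of $h$ at the origin. On the stochastic side, the fluctuation $r_{j}$ is inflated to $a_{j}^{2}r_{j}$, which is precisely why the geometric separation $a_{j+1}\ge(1+\varepsilon)a_{j}$ is imposed: it keeps the denominator comparable to $a_{j}^{-2}$ rather than asymptotically smaller, preventing an uncontrolled blow-up of $S_{j}$.
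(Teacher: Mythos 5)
Your proposal is correct and takes essentially the same route as the paper: the paper's proof of this theorem is a one-line deferral to \cite[Proposition~2]{AAFO}, replacing the continuous statistics with ${d}_{j.}^{(2, \theta_j, \delta_j)}$ and invoking Lemma~\ref{lemma_3} for the deviation bound, and your decomposition into the fluctuation $S_j$ (Lemma~\ref{lemma_3} plus the denominator bound $a_j^{-2}-a_{j+1}^{-2}\ge \kappa a_j^{-2}$) and the bias $D_j$ (second-order expansion of $J$) is precisely that argument written out in full. One notational caveat: the constant $c_3$ in your expansion $J(a)=c_2 s_0^{-4\alpha}+\alpha c_3 s_0^{-4\alpha-2}a^{-2}+O(a^{-4})$ is necessarily $2\int_{\mathbb{R}}\lambda^2|\widehat{\psi}(\lambda)|^2\,d\lambda$ as in \cite{AAFO}, not the $\max_{\lambda\in[-A,A]}|\widehat{\psi}(\lambda)|^2$ defined in Section~\ref{def_assumptions} of this paper --- an inconsistency inherited from the paper itself rather than a flaw in your argument.
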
	
	
	\begin{proof}
		The statement in the theorem is obtained in the same way as it is done in \cite[Proposition 2]{AAFO} by replacing $\Delta \bar{\delta}_{ j\cdot}^{(2)}$ with $\Delta {d}_{j.}^{(2, \theta_j, \delta_j)}$, $\bar{\delta}_{ j\cdot}^{(2)}$ with ${d}_{j.}^{(2, \theta_j, \delta_j)}$  and applying Lemma {\rm \ref{lemma_3}}.		
	\end{proof}

	\section{Estimation of \texorpdfstring{$(s_{0},\alpha)$}{soal}}\label{Section Estimation}

 This section combines results from \cite{AAFO} and the adjustment approach from \cite{Olenko:2022} to obtain simultaneous parameter estimators.
 
	Sections~\ref{section 1st statistic} and \ref{section 2nd statistic} proved that for the true values of parameters $(s_{0},\alpha)$ the vector of statistics
	\begin{align*}
		\begin{pmatrix} 
			{d}_{j.}^{(2, \theta_j, \delta_j)}/   c_{2} \\
			\Delta  {d}_{j.}^{(2, \theta_j, \delta_j)}/ c_{3}
		\end{pmatrix}
		\xrightarrow{a.s.} 
		\begin{pmatrix} 
			s_{0}^{-4\alpha} \\
			\alpha s_{0}^{-4\alpha-2}
		\end{pmatrix}
		,\quad j\to +\infty.  
	\end{align*}
	Analogously to the continuous case in \cite[Section 5]{AAFO}, let the pair $(\hat{s}_{0j}, \hat{\alpha }_{j})$ be a solution of the system of equations 
	\begin{align}\label{5_1}
		\begin{cases}
			\hat{s}_{0j}^{-4 \hat{\alpha }_{j}} = {d}_{j.}^{(2, \theta_j, \delta_j)}/   c_{2},\\
			\hat{\alpha }_{j}\hat{s}_{0j}^{-4\hat{\alpha }_{j}-2}=\Delta {d}_{j.}^{(2, \theta_j, \delta_j)}/ c_{3}.
		\end{cases}
	\end{align}

	There might be cases for which $\left(\frac{ {d}_{j.}^{(2, \theta_j, \delta_j)}}{   c_{2}}, \frac{\Delta {d}_{j.}^{(2, \theta_j, \delta_j)}} { c_{3}}\right) $ is not in the feasible region of $ \left( s_{0}^{-4 \alpha},   \alpha s_{0}^{-4\alpha -2} \right)$ in which the system (\ref{5_1}) has a unique solution. For this reason, we propose adjusted estimates.
	
	Let $(y_1,y_2 )\in \mathcal{D},$ where $\mathcal{D}:=\Big\{ (y_1,y_2) \in (0,1)\times (0,y^2_{1}/2) \Big\}.$ 
	It was shown in \cite[Lemma~8,~9]{AAFO} that the system
	\begin{align} 
		\begin{cases}\label{5_2}
			s_{0}^{-4 \alpha}=y_1,\\
			\alpha s_{0}^{-4\alpha -2}=y_2,
		\end{cases}
	\end{align}
	has a unique solution $(s_{0}, \alpha)\in  (1,+\infty) \times \big(0,\frac{1}{2}\big)$.\\
	Thus, if $\bigg(\frac{{d}_{j.}^{(2, \theta_j, \delta_j)}}{   c_{2}},\frac{\Delta{d}_{j.}^{(2, \theta_j, \delta_j)}}{ c_{3}}\bigg)\in \mathcal{D}$ then there is a pair $(\hat{s}_{0j}, \hat{\alpha }_{j})\in (1,+\infty)\times(0, \frac{1}{2})$ that satisfies the system of equations $( \ref{5_1}).$  
	
	The solution to \eqref{5_2} was given in \cite[Proposition~3]{AAFO} in terms of the $LambertW$ function, see  \cite{Corless1996}, by
	\begin{equation}
		(s_{0}, \alpha) = \Phi^{-1}(y_1,y_2):= \left(exp\left(\frac{G(y_1,y_2)}{2}     \right),\frac{y_2} {y_1} exp\left( G(y_1,y_2) \right)\right),\label{5_4}
	\end{equation} 
	where $G(y_1,y_2)=LambertW\left(-\frac{y_1\ln\left(y_1\right)} {2y_2}\right).$ The notation $\Phi^{-1}$ is used for consistency with notations in \cite{Olenko:2022}
	
	The system of equations {\rm(\ref{5_2})} has a unique solution given by (\ref{5_4}), where the branch Lambert$W_0$ of the function is used.

	Therefore, if $s_{0}$ and $\alpha$ are the true value of parameters then the corresponding $(y_1,y_2)\in \mathcal{D}.$ As $\mathcal{D}$ is an open set, there exists some $j_{0}\in \mathbb{N}$ such that $\bigg(\frac{{d}_{j.}^{(2, \theta_j, \delta_j)}}{   c_{2}},\frac{\Delta {d}_{j.}^{(2, \theta_j, \delta_j)}}{ c_{3}}\bigg)\in \mathcal{D}$  for all $j\geq j_{0}$ and system (\ref{5_1}) has a unique solution. However, for some $j<j_0$ it might happen that the estimated values of statistics $\bigg(\frac{{d}_{j.}^{(2, \theta_j, \delta_j)}}{   c_{2}},\frac{\Delta{d}_{j.}^{(2, \theta_j, \delta_j)}}{ c_{3}}\bigg)\notin \mathcal{D}$ even if   $(y_1,y_2)\in \mathcal{D}$ for the  corresponding true value $ (s_{0},\alpha).$  To deal with such cases, similar to the continuous case in \cite[Definition~2]{AAFO} and \cite[Section 5]{Olenko:2022}, we suggest an adjustment of $\bigg(\frac{{d}_{j.}^{(2, \theta_j, \delta_j)}}{   c_{2}},\frac{\Delta{d}_{j.}^{(2, \theta_j, \delta_j)}}{ c_{3}}\bigg)$ that always has its values in $\mathcal{D}.$
	
	For $\eps\in(0,1)$ and $(y_1,y_2)\in\mathbb{R}^2,$ let us define the following continuous vector-valued truncating function $\mathcal{T}$  taking values in $\mathcal{D}$
	\[\mathcal{T}(y_1,y_2,\eps) = \Big( \mathcal{T}_1(y_1,\eps) ~,~ \mathcal{T}_2(y_1,y_2,\eps) \Big) ,\]
	where
	\begin{align*}  \mathcal{T}_1(y_1,\eps) &:= \max( \eps , \min( y_1, 1-\eps) ) = \begin{cases}
			\eps,   & \text{ if } y_1 \le \eps, \\
			y_1,    & \text{ if } \eps \le y_1 \le 1-\eps, \\
			1-\eps, & \text{ if } y_1 > 1-\eps,
		\end{cases}  \\
		\mathcal{T}_2(y_1,y_2,\eps) &:= \max\left( {\eps^2}/{4} , \min\left( y_2, \frac{ \big(\mathcal{T}_1(y_1,\eps)\big)^2 }{2}-{\eps^2}/{4} \right) \right)\\
		&\ \ = \begin{cases}
			{\eps^2}/{4},   & \text{ if } y_2 \le {\eps^2}/{4}, \\
			y_2,    & \text{ if } {\eps^2}/{4} \le y_2 \le \frac{ \big(\mathcal{T}_1(y_1,\eps)\big)^2 }{2}-{\eps^2}/{4}, \\
			\frac{ \big(\mathcal{T}_1(y_1,\eps)\big)^2 }{2}-{\eps^2}/{4},
			& \text{ if } y_2 > \frac{ \big(\mathcal{T}_1(y_1,\eps)\big)^2 }{2}-{\eps^2}/{4}.
		\end{cases}
	\end{align*}
	Note, that $\mathcal{T}\left( \frac{{d}_{j.}^{(2, \theta_j, \delta_j)} }{ c_2 } ~,~
	\frac{ \Delta{d}_{j.}^{(2, \theta_j, \delta_j)}} { c_3} ~,~
	\varepsilon\right) \in \mathcal{D}$ and coincides with those values $\left( \frac{{d}_{j.}^{(2, \theta_j, \delta_j)} }{c_2} ~,~
	\frac{ \Delta{d}_{j.}^{(2, \theta_j, \delta_j)}} { c_3 } \right)$ that belong to $\mathcal{D}.$
	The geometric illustrations for this and analogous adjusted values were provided in \cite{AAFO} and \cite{Olenko:2022}. Also, it was shown that they have the same rate of convergence to $ \left( s_{0}^{-4 \alpha},   \alpha s_{0}^{-4\alpha -2} \right)$ as their non-adjusted counterparts.
	
	Analogously to \cite{AAFO, Olenko:2022} we introduce discrete versions of adjusted statistics of the parameters $s_0$ and $\alpha.$
	\begin{definition}\label{def_adjusted_stats}
		The adjusted statistic for the parameter $(s_0,\alpha)$ is
		\[
		\widehat{(s_0,\alpha)}_j := \Phi^{-1} \left(
		\mathcal{T}\left( \frac{{d}_{j.}^{(2, \theta_j, \delta_j)} }{c_2} ~,~
		\frac{ \Delta{d}_{j.}^{(2, \theta_j, \delta_j)}} {c_3} ~,~
		\frac{1}{m_j} \right) \right).
		\]
	\end{definition}


	Finally, the main result is given below.

	\begin{theorem}\label{Theorem_1} 
		Let the process $X(t)$ and the filter $\psi(\cdot)$ satisfy Assumptions~\mbox{\rm{\ref{Assumption_1}--\ref{Assumption_3}}} \it{and
			$(\widehat{s}_{0j}, \widehat{\alpha }_{j})$ be the adjusted statistic. 	
			If $s_{0}$ and $\alpha$ are the true values of parameters and the assumptions of Theorem~\rm{\ref{pro_2}} {\it hold true, then
				\[\widehat s_{0j} \xrightarrow{a.s.} s_{0}\quad \mbox{and}\quad \widehat \alpha_j \xrightarrow{a.s.}\alpha,\quad   j\to +\infty.\]
				Moreover, there are almost surely finite random variables $c_{5}$ and $c_{6}$ such that for all $j\in \mathbb{N}$ it holds
				\[
				|\widehat s_{0j}-s_{0}|\leq c_{5} \max\big (a_{j}^{2} r_{j}, a_{j}^{-2}\big)\
				\]
				\text{and} 
				\[|\widehat \alpha_{j}-\alpha|\leq c_{6} \max\big (a_{j}^{2} r_{j}, a_{j}^{-2}\big). 
				\]}}
	\end{theorem}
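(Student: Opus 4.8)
The plan is to transfer the almost sure convergence and the convergence rates of the two scalar statistics, established in Theorems~\ref{pro_1} and~\ref{pro_2}, first through the truncation map $\mathcal{T}(\cdot,\cdot,1/m_j)$ and then through the inverse map $\Phi^{-1}$ given in~\eqref{5_4}, following the continuous-case arguments of \cite[Section~5]{AAFO} and \cite[Section~5]{Olenko:2022}. First I would record that Theorems~\ref{pro_1} and~\ref{pro_2} yield
\[
\left(\frac{{d}_{j.}^{(2, \theta_j, \delta_j)}}{c_2},\ \frac{\Delta {d}_{j.}^{(2, \theta_j, \delta_j)}}{c_3}\right)\xrightarrow{a.s.}(y_1,y_2):=\left(s_0^{-4\alpha},\ \alpha s_0^{-4\alpha-2}\right),
\]
together with the componentwise deviation bounds $C\max(r_j,a_j^{-2})$ and $c_6\max(a_j^2 r_j,a_j^{-2})$. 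Since $a_j\ge 2A$ is unboundedly increasing, $r_j\le a_j^2 r_j$ eventually, so the first bound is dominated by the second and both coordinates obey the single rate $\max(a_j^2 r_j,a_j^{-2})$, which tends to $0$ (this is exactly what makes the convergence in Theorem~\ref{pro_2} hold) with an almost surely finite random constant.

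Next I would handle the truncation. Because $(s_0,\alpha)$ are the true parameters, \cite[Lemmas~8,~9]{AAFO} ensure that the corresponding point $(y_1,y_2)$ lies in the open set $\mathcal{D}$, hence at a fixed positive distance from $\partial\mathcal{D}$. As $\eps=1/m_j\to0$ and the statistics converge a.s. to this interior point, for all sufficiently large $j$ (almost surely) the pair $\big({d}_{j.}^{(2,\theta_j,\delta_j)}/c_2,\ \Delta{d}_{j.}^{(2,\theta_j,\delta_j)}/c_3\big)$ already lies in the shrunken region on which $\mathcal{T}(\cdot,\cdot,1/m_j)$ acts as the identity, so the truncation leaves the statistics unchanged and the rate $\max(a_j^2 r_j,a_j^{-2})$ is inherited by $\mathcal{T}\big({d}_{j.}^{(2,\theta_j,\delta_j)}/c_2,\ \Delta{d}_{j.}^{(2,\theta_j,\delta_j)}/c_3,\ 1/m_j\big)$. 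As a backup for the finitely many exceptional indices, I would note that each coordinate of $\mathcal{T}$ is a projection onto an interval containing the corresponding coordinate of $(y_1,y_2)$ once $\eps$ is small, hence nonexpansive toward $(y_1,y_2)$.

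Then I would push everything through $\Phi^{-1}$. On all of $\mathcal{D}$ one has $y_1\in(0,1)$ and $y_2>0$, so the argument $-y_1\ln(y_1)/(2y_2)$ of $\mathrm{Lambert}W_0$ is strictly positive and thus bounded away from the branch point $-1/e$; consequently $\Phi^{-1}$ is real-analytic on $\mathcal{D}$, hence continuous on $\mathcal{D}$ and locally Lipschitz at the interior point $(y_1,y_2)$, with $\Phi^{-1}(y_1,y_2)=(s_0,\alpha)$ by \cite[Proposition~3]{AAFO}. Continuity gives $\widehat s_{0j}\to s_0$ and $\widehat\alpha_j\to\alpha$ almost surely, while the local Lipschitz bound converts the rate of the truncated vector into
\[
|\widehat s_{0j}-s_0|\le C'\max(a_j^2 r_j,a_j^{-2}),\qquad |\widehat\alpha_j-\alpha|\le C''\max(a_j^2 r_j,a_j^{-2})
\]
for all large $j$. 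To obtain the bounds for every $j\in\mathbb{N}$, I would absorb the finitely many small indices into the random constants: for each fixed $j$ the truncated argument lies in $\mathcal{D}$, so $\widehat s_{0j},\widehat\alpha_j$ are almost surely finite, each ratio $|\widehat s_{0j}-s_0|/\max(a_j^2 r_j,a_j^{-2})$ is an almost surely finite random variable, and taking the maximum over the exceptional indices together with $C',C''$ produces the claimed almost surely finite $c_5,c_6$.

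I expect the main obstacle to be the truncation step rather than the analysis of $\Phi^{-1}$ (which, as noted, is clean because the $\mathrm{Lambert}W_0$ argument stays positive throughout $\mathcal{D}$). The delicate point is to show that replacing the raw statistics by their $1/m_j$-truncated versions neither spoils the almost sure limit nor inflates the rate $\max(a_j^2 r_j,a_j^{-2})$; this hinges on the true point being strictly interior to $\mathcal{D}$ and on $1/m_j\to0$ so that the truncation is eventually inactive. I would rely on \cite[Lemmas~8,~9]{AAFO} for the interiority and non-degeneracy rather than re-deriving them.
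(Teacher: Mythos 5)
Your proposal is correct and takes essentially the same route as the paper: the paper gives no separate proof of Theorem~\ref{Theorem_1}, deferring instead to the continuous-case arguments of \cite[Section~5]{AAFO} and \cite[Section~5]{Olenko:2022} combined with Theorems~\ref{pro_1} and~\ref{pro_2}, which is exactly what you spell out --- transfer of the common rate $\max(a_j^2 r_j, a_j^{-2})$ from the two statistics, eventual inactivity of the truncation $\mathcal{T}(\cdot,\cdot,1/m_j)$ because the true point $(s_0^{-4\alpha}, \alpha s_0^{-4\alpha-2})$ lies in the open set $\mathcal{D}$, and continuity together with local Lipschitzness of $\Phi^{-1}$ on $\mathcal{D}$ via the $LambertW_0$ branch. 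Your device of absorbing the finitely many exceptional indices into the almost surely finite random constants $c_5$, $c_6$ is also the intended way to get the bounds for all $j\in\mathbb{N}$ rather than only for large $j$.
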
 
	
	\begin{remark} Note that for any sequences $\{a_j\}$ and $\{r_j\}$ there exists a sequence $\{m_j\}$ such that the condition~(\ref{cond1}) holds true.  Thus, choosing a sufficiently fast increasing range of averaging over the index $k$ and increasing and decreasing rates of the sequences $\{\theta_j\}$ and $\{\delta_j\}$ respectively, by Remark~\ref{rem7}, one can obtain the desirable convergence rate at levels $j.$ 
	\end{remark}

	\section{Numerical studies}\label{sec_num} 
	In this section, the results are illustrated and validated through simulation modelling.
 
 First, we generated realisations of the Gegenbauer-type time series and calculated the considered statistics ${d}_{j.}^{(2, \theta_j, \delta_j)}$ and $\Delta{d}_{j.}^{(2, \theta_j, \delta_j)}$ using the Mexican hat wavelet as a filter. Next, we computed the statistics $\widehat{s}_{0j}$ and $\widehat{\alpha}_{j}$ using the established formulas. This procedure was repeated multiple times for various numbers of observations and different levels of $j$ to numerically investigate empirical distributions of the estimators.  Finally, we demonstrated the convergence of the estimators to their theoretical values as the number of observations and level $j$ increased.
	
The following representation of the Gegenbauer process $X(t)$ was employed, see \cite[p. 47]{Leonenko:1999}
	\begin{equation} \label{disrete_extension}
		X(t) = \sum_{n=0}^{\infty } C_n^\mu (2\eta) \varepsilon (t - n),
	\end{equation}
	where $C_n^\mu(\cdot)$ is the Gegenbauer polynomial, $\varepsilon(t), t \in \mathbb{Z},$ is a zero-mean white noise with the variance $\sigma_{\varepsilon}^2.$ The Gegenbauer polynomials are defined by 
	\[ C_n^\mu(2\eta) = \sum_{k=0}^{[\frac{n}{2}]} (-1)^k  \frac{(2\eta)^{n - 2k}\Gamma(\mu + n - k)}{\Gamma(\mu) \Gamma(k + 1) \Gamma(n - 2k + 1)}, \]
	where $\Gamma(\cdot)$ is the Gamma function. Note, that
		\[ C_n^\mu(2\eta) \sim \frac{\cos((n + \mu)\nu -  (\mu\pi / 2))}{\Gamma(\mu) \sin^\mu (\nu)}\left( \frac{2}{n} \right)^{1-\mu},\quad \mbox{as}\ n \to \infty, \]
	where $\nu = \arccos \eta.$
	
	We generated the time series using the truncated version of \eqref{disrete_extension} with parameter values $\mu = 0.1$ and $\eta = 0.3.$ These values correspond to the parameters  $\alpha=0.1$ and $s_0 = \arccos(\eta) \approx 1.27,$ which are in the admissible region $\mathcal{D}.$ The plots of the corresponding spectral density function and a realization of this time series are shown in Figure~ \ref{fig1*}. Two cases were considered when the series in \eqref{disrete_extension} was truncated to $10^4$ and $10^7$ terms. 
 
 The Mexican hat wavelet, that was used to compute ${d}_{jk}^{(\theta,\delta)},$ is defined by
	 \[ \psi(t)=\frac{2}{\sqrt{3\sigma}\pi^\frac{1}{4} }\left( 1-\left(\frac{t}{\sigma}\right)^2\right) \mathrm{e}^{-\frac{t^2}{2\sigma^2}},\]    
	 where $\sigma > 0.$ Though, its Fourier transform 
	 \[\widehat \psi(\lambda) =\frac{\sqrt{8} \pi^\frac{1}{4} \sigma^\frac{5}{2}} {\sqrt{3}} \lambda^2 \mathrm{e}^{-\frac{\sigma^2 \lambda^2}{2}}\] does not have finite support, it exhibits a sufficiently fast decay rate  as~\mbox{$\lambda \to +\infty.$} As only finite sums and intervals are used in numerical computations, this guarantees the fulfilment of the assumptions. Additionally, the numerical studies suggest that the assumptions of finite support can be relaxed. The R package WMTSA was used to calculate ${d}_{jk}^{(\theta,\delta)}.$
	 
	The time series were generated 1,000 times, and for each simulation, the values for  ${d}_{j.}^{(2, \theta_j, \delta_j)},$ $\Delta{d}_{j.}^{(2, \theta_j, \delta_j)},$ $\widehat{s}_{0j}$  and $\widehat{\alpha }_{j}$ were computed. The process was repeated at different levels $j$ and fractions, where only shorter time series corresponding to the specified percentage of observations were used instead of the entire time series. The algorithm consisted of the following steps:
	 
	 \begin{itemize}
 \item compute ${d}_{jk}^{(\theta,\delta)}$ using (\ref{djk});
	 	\item 
	 	obtain the value for ${d}_{j.}^{(2, \theta_j, \delta_j)}$ using \eqref{first_discr_stat};	 	
	 	\item 
	 	calculate the value for $\Delta{d}_{j.}^{(2, \theta_j, \delta_j)}$ using \eqref{second_discr_stat};	 	
	 	\item 
	 	compute the estimates  $\widehat{s}_{0j}$  and $ \widehat{\alpha }_{j}$ using Definition~\ref{def_adjusted_stats}.
	 \end{itemize}
	 	In the simulations, the following values were used: $a_j = j,$ $b_{jk} = k,$ $\gamma_{j} = 1,$ $r_j = a_j^{-2.5},$ $m_j = a_j^9,$ $\theta_j = j^{13/6},$ and $\delta_j = j^{-22-{1}/{6}},$ $j = 1, \dots, 7.$  These values satisfy the assumptions of Theorem~\ref{pro_1}, see \cite[Example 4]{AAFO}. 
	
		\begin{figure}[!hbt]
		\begin{subfigure}{0.5\textwidth}
			\centering
			\includegraphics[width = \textwidth, height = \textwidth, trim={0 1.5cm 0 2cm},clip]{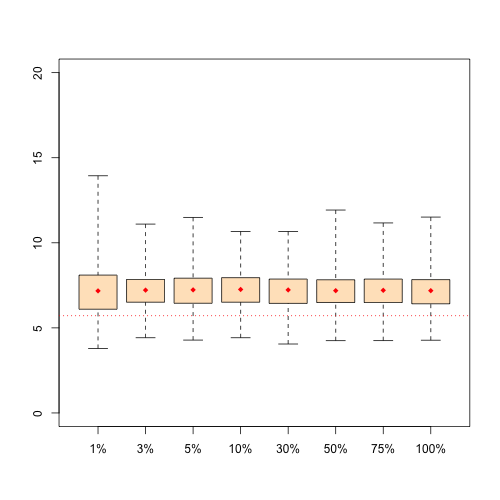}
			\caption{The case of $j=1$}
		\end{subfigure}
		\begin{subfigure}{0.5\textwidth}
			\centering
			\includegraphics[width = \textwidth, height = \textwidth, trim={0 1.5cm 0 2cm},clip]{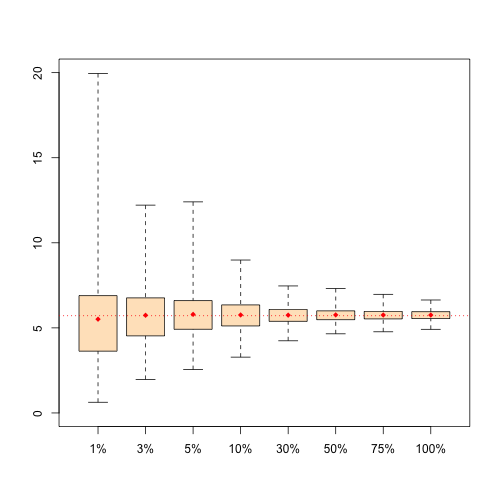}
			\caption{The case of $j=7$}
		\end{subfigure}
		\caption{Box plots of the ${d}_{j.}^{(2, \theta_j, \delta_j)},$ where 100\% corresponds to $10^4$ observations} \label{fig_10k_1stat_j17}
	\end{figure}
	\begin{figure}[!hbt]
		\begin{subfigure}{0.5\textwidth}
			\centering
			\includegraphics[width = \textwidth, height = \textwidth, trim={0 1.5cm 0 1.5cm},clip]{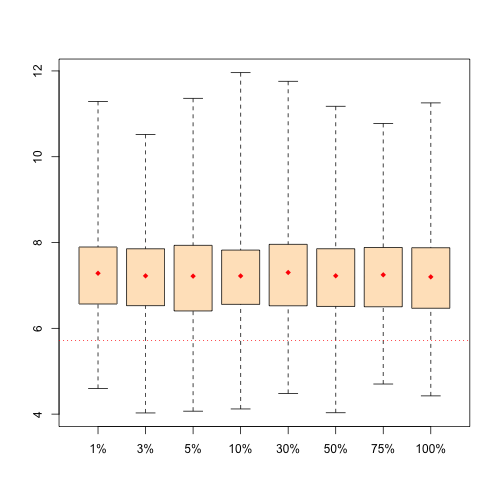}
			\caption{The case of $j=1$}
		\end{subfigure}
		\begin{subfigure}{0.5\textwidth}
			\centering
			\includegraphics[width = \textwidth, height = \textwidth, trim={0 1.5cm 0 1.5cm},clip]{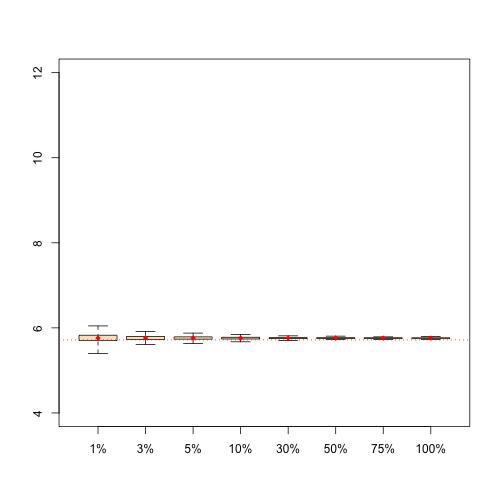}
			\caption{The case of $j=7$}
		\end{subfigure}
		\caption{Box plots of the ${d}_{j.}^{(2, \theta_j, \delta_j)},$ where 100\% corresponds to $10^7$  observations} \label{fig_10m_1stat_j17}
	\end{figure}

	 The box plots in Figures \ref{fig_10k_1stat_j17} and \ref{fig_10m_1stat_j17} illustrate the first statistic ${d}_{j.}^{(2, \theta_j, \delta_j)}$ across varying numbers of observations and levels of $j.$ In Figure~\ref{fig_10k_1stat_j17}, simulated time series consisting of $10^4$ values  were used. The horizontal axis denotes fractions of used observations, where 100\% represents the scenario using all values, and other percentages indicate subsets of observations used for computations (e.g., 1\% utilizes 100~simulated values). The left plot displays box plots for $j=1$, while the right plot corresponds to $j=7.$ The true values of the considered parameters are shown by horizontal dashed lines. Similar results are presented in Figure~\ref{fig_10m_1stat_j17} but for simulated time series with $10^7$ values. 
  
  The plots in Figures \ref{fig_10k_1stat_j17} and \ref{fig_10m_1stat_j17} reveal that for small values of $j,$ the estimated values of ${d}_{7.}^{(2, \theta_7, \delta_7)}$  can significantly diverge from the true values, resulting in a wide spread of estimates. However, consistent with theoretical findings, as both~$j$ and the number of observations increase, the estimates converge towards their theoretical values.	Similar numerical results were obtained for other statistics, but are not presented here due to space constraints.
		
	\begin{figure}[!htb]
		\begin{subfigure}{0.5\textwidth}
			\centering
			\includegraphics[width = \textwidth, height = \textwidth, trim={0 1.5cm 0 1.5cm},clip]{delta1_mat1_10000000_j_7.jpg}
			\caption{Case of ${d}_{7.}^{(2, \theta_7, \delta_7)}$}
		\end{subfigure}
		\begin{subfigure}{0.5\textwidth}
			\centering
			\includegraphics[width = \textwidth, height = \textwidth, trim={0 1.5cm 0 1.5cm},clip]{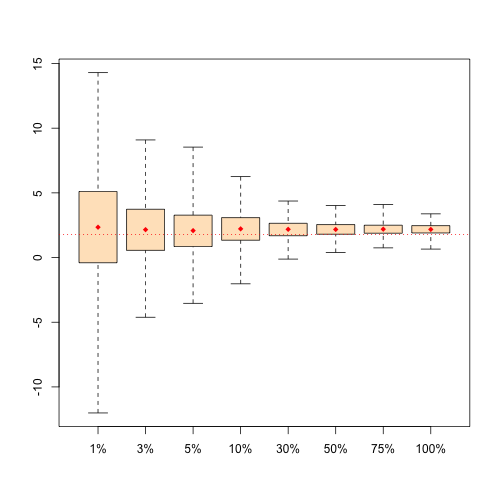}
			\caption{Case of $\Delta{d}_{6.}^{(2, \theta_6, \delta_6)}$}
		\end{subfigure}
		\begin{subfigure}{0.5\textwidth}
			\centering
			\includegraphics[width = \textwidth, height = \textwidth, trim={0 1.5cm 0 1.5cm},clip]{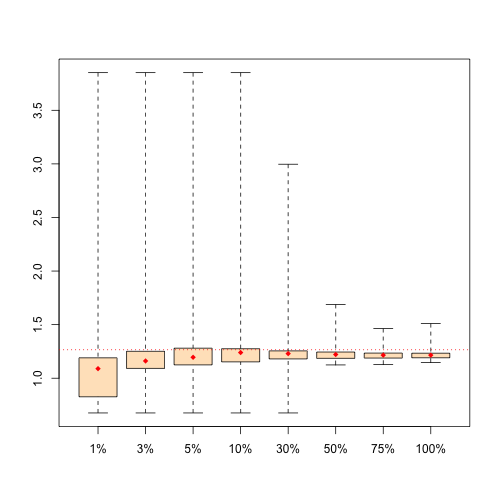}
			\caption{Case of $\widehat{s}_{06}$}
		\end{subfigure}
		\begin{subfigure}{0.5\textwidth}
			\centering
			\includegraphics[width = \textwidth, height = \textwidth, trim={0 1.5cm 0 1.5cm},clip]{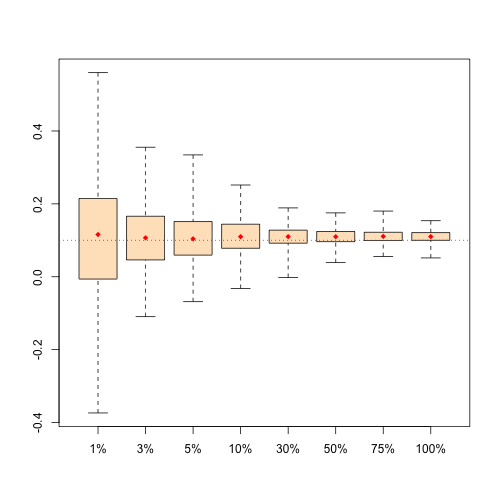}
			\caption{Case of $\widehat{\alpha }_{6}$}
		\end{subfigure}		
		\caption{Box plots of the statistics, where 100\% corresponds to $10^7$ of observations} \label{fig_10m_all_j67}
	\end{figure}
	
		Figure \ref{fig_10m_all_j67} illustrates the convergence of all estimators using the time series with~$10^7$ sampled values and their varying fractions. Given a large number of observations and the dense grid of time points, the 100\% fraction can be considered as the case of a functional time series, for which the closeness of the estimators to the true values was previously confirmed in \cite{AAFO, Olenko:2022}. Smaller percentages and numbers of sampled values correspond to discrete scenarios with shorter observation periods and lower time densities of observations. Increasing the fraction with $j$ entails increasing both the length of the truncated sampling time interval (controlled by $\theta_j$) and the sampling time rate (given by $\delta_j^{-1}$).
  
Compared to Figures~\ref{fig_10k_1stat_j17} and \ref{fig_10m_1stat_j17}, rather accurate estimates for the case of ${d}_{7.}^{(2, \theta_7, \delta_7)}$ are obtained even for 1\% of the simulated values. According to Theorems~\ref{pro_1},~\ref{pro_2} and~\ref{Theorem_1}, the rates of convergence for $\Delta{d}_{j.}^{(2, \theta_j, \delta_j)},  \widehat{s}_{0j} \mbox{ and } \widehat{\alpha }_{j}$ are different and slower than for ${d}_{j.}^{(2, \theta_j, \delta_j)}.$  Figure~\ref{fig_10m_all_j67} confirms this theoretical result. It also demonstrates that when the fraction increases, the estimators quickly converge to their functional counterparts, and therefore, to the true values of the parameters. Additionally, the simulation results demonstrate that it is enough to use the level $j=7,$ as the estimates of the parameters $s_0$ and $\alpha$ are close to their theoretical values starting from the 5\% fraction, with the recommended values being around 30-50\%.

	\begin{figure}[!htb]
		\centering
		\includegraphics[width = \textwidth, height = 0.6\textwidth, trim={0 0 0 1.5cm},clip]{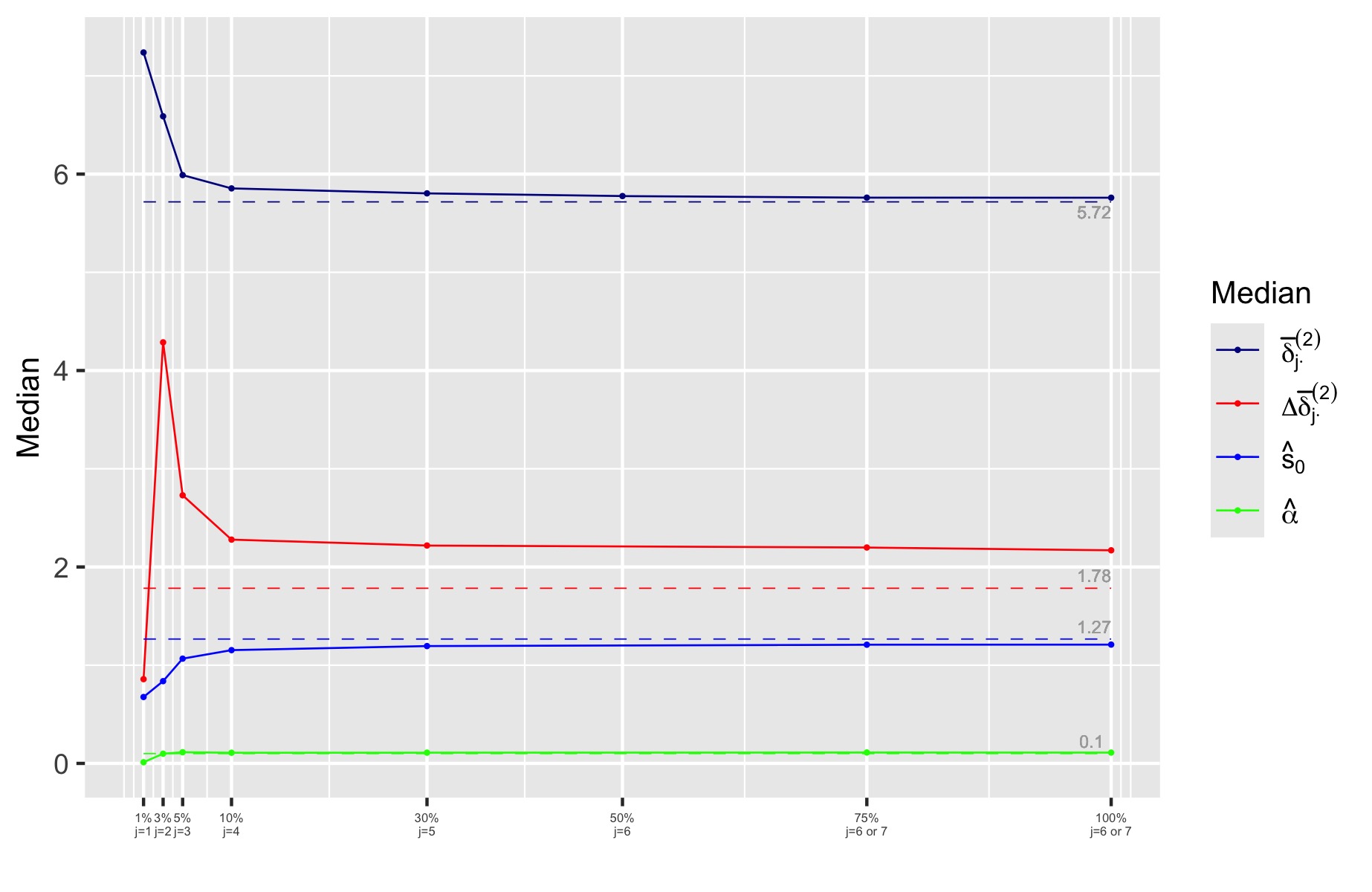}
		\caption{Medians of estimates for increasing fractions and $j$} \label{fig_stats_medium}
	\end{figure}
	
	\begin{figure}[!htb]
		\centering
		\includegraphics[width = \textwidth, height = 0.6\textwidth, trim={0 0 0 1.5cm},clip]{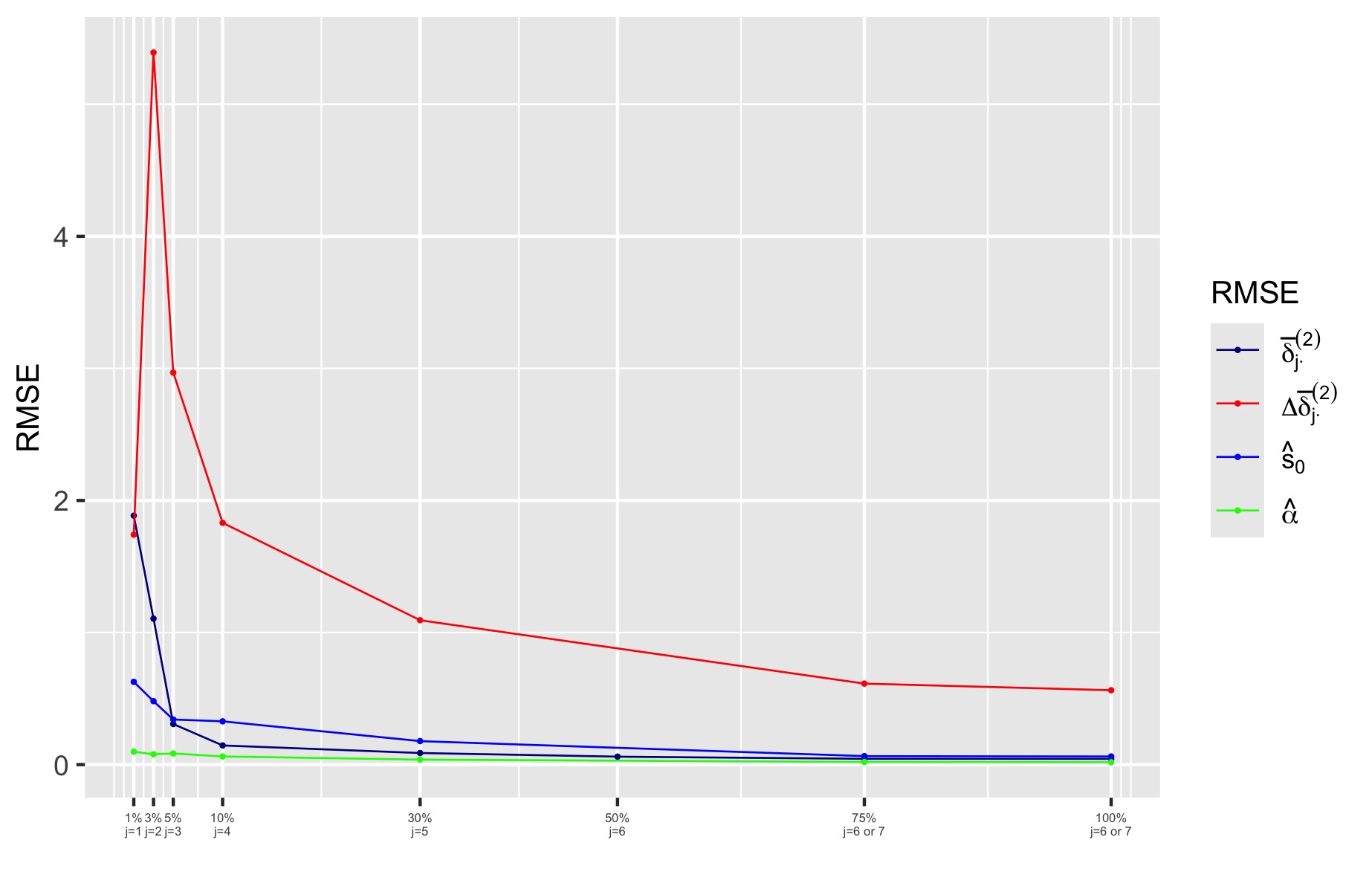}
		\caption{RMSE for increasing fractions and $j$} \label{sample_sizes_vs_mse}
	\end{figure}
	
	Figure \ref{fig_stats_medium} illustrates the convergence as both the level, the number and sampling rate of simulated values (that correspond to $j,$  $\theta_j$ and $\delta_j^{-1}$) increase. The corresponding root mean squared errors (RMSE) decrease, as shown in Figure~\ref{sample_sizes_vs_mse}. The plots confirm the converges of median values of the estimators to the true values of parameters, and also their mean square convergence. Initially, as expected, the deviations are chaotic and quite significant. However, as the number of levels and sampling intervals increases, the estimators converge to the true values. The rate of convergence of the second statistic, $\Delta {d}_{j.}^{(2, \theta_j, \delta_j)},$ is noticeably slower compared to the other three statistics. Nevertheless, given the rapid convergence of the first statistic, it is evident that the studied adjusted estimators $\widehat{s}_{0j}$ and $\widehat{\alpha }_{j}$  also exhibit good convergence rates. Figure~\ref{sample_sizes_vs_mse}  suggests that the recommended values are approximately 4-5 for $j$ and 30-50\% for the fraction. 

	\section{Conclusion}\label{concl}

	The paper investigates a semiparametric model of cyclic long-memory time series.  It demonstrates that the generalized filtered method-of-moment approach, initially developed for functional data, can be expanded to accommodate the discretely sampled scenarios. The derived estimates exhibit analogous properties to the continuous case, in particular, strong consistency. The obtained results on the closeness of discrete and continuous filter transforms have potential applications in other areas of statistics and stochastics.
	
	Some important areas for future studies include:
	
	\begin{itemize}
		\item[--] Further investigating asymptotic distributions of the considered discrete estimates, see~\cite{Alodat:2022, Olenko:2022};
		\item[--] Extending the proposed approach to the case of time series sampled at random locations;
		\item[--] Extending the approach to the spatial case, see~\cite{Ayache:2018, Espejo:2015, Klyolen:2012};
		\item[--] Further studying properties of filter transforms, in particular, their applications to nonlinear transformations of cyclic long memory time series, see~\cite{Alodat:2020}.
		\item[--] Comparing properties of the proposed discrete statistics with least squares, likelihood-type, and other existing methods via simulation studies, see~\cite{Smallwood:2021, Hunt:2022, Hunt:2023, Whitcher:2004}.
	\end{itemize}
	\section*{Acknowledgements} This research was supported under the Australian Research Council's Discovery Projects funding scheme (project number  DP220101680).  Antoine Ayache was partially supported by the Labex CEMPI (ANR-11-LABX-0007-01). Andriy Olenko was also partially supported by La Trobe University's SCEMS CaRE and Beyond grant.

	\bibliographystyle{abbrv}
	\bibliography{ref}
	
\end{document}